\newtheorem{theorem}{Theorem}
\theoremstyle{plain}
\newtheorem{definition}{Definition}
\newtheorem{example}{Example}
\newtheorem{lemma}{Lemma}
\newtheorem{remark}{Remark}
\numberwithin{equation}{section}
\begin{document}
\title[Local Zeta Functions for Rational Functions]{Local Zeta Functions for Rational Functions and Newton Polyhedra}
\author{Miriam Bocardo-Gaspar}
\address{Centro de Investigacion y de Estudios Avanzados del I.P.N., Departamento de
Matematicas, Av. Instituto Politecnico Nacional 2508, Col. San Pedro
Zacatenco, Mexico D.F., C.P. 07360, Mexico}
\email{mbocardo@math.cinvestav.mx}
\author{W. A. Z\'{u}\~{n}iga-Galindo}
\address{Centro de Investigacion y de Estudios Avanzados del I.P.N., Departamento de
Matematicas, Av. Instituto Politecnico Nacional 2508, Col. San Pedro
Zacatenco, Mexico D.F., C.P. 07360, Mexico}
\email{wazuniga@math.cinvestav.edu.mx}
\thanks{The second author was partially supported by Conacyt Grant No. 250845.}
\subjclass[2000]{Primary 14G10, 11S40; Secondary 14M25}
\keywords{Igusa local zeta functions, Newton polyhedra, non-degeneracy conditions.}

\begin{abstract}
In this article, we introduce a notion of non-degeneracy, with respect to
certain Newton polyhedra, for rational functions over non-Archimedean locals
fields of arbitrary characteristic. We study the local zeta functions attached
to non-degenerate rational functions, we show the existence of a meromorphic
continuation for these zeta functions,\ as rational functions of $q^{-s}$, and
give explicit formulas. In contrast with the classical local zeta functions,
the meromorphic continuation of zeta functions for rational functions have
poles with positive and negative real parts.

\end{abstract}
\maketitle

\section{Introduction}

The local zeta functions in the Archimedean setting, i.e. in $\mathbb{R}$ or
$\mathbb{C}$, were introduced in the 50's by I. M. Gel'fand and G. E. Shilov
\cite{Gelfand}. An important motivation was that the meromorphic continuation
for the local zeta functions implies the existence of fundamental solutions
for differential operators with constant coefficients. The meromorphic
continuation was established, independently, by M. Atiyah \cite{Atiyah} and J.
Bernstein \cite{Ber}. On the other hand, by the middle of the 60's, A. Weil
studied local zeta functions, in the Archimedean and non-Archimedean settings,
in connection with the Poisson-Siegel formula \cite{Weil}. In the 70's, using
Hironaka's resolution of singularities theorem, J.-I. Igusa \ developed a
uniform theory for local zeta functions and oscillatory integrals attached to
polynomials with coefficients in a field of characteristic zero \cite{Igusa1},
\cite{Igusa}. In the $p$-adic setting, local zeta functions are connected with
the number of solutions of polynomial congruences $\operatorname{mod}$ $p^{m}$
and with exponential sums $\operatorname{mod}$ $p^{m}$. In addition, there are
many intriguing conjectures relating the poles of the local zeta functions
with topology of complex singularities, see e.g. \cite{Denef}, \cite{Igusa}.
More recently, J. Denef and F. Loeser introduced in \cite{D-L} the motivic
zeta functions which constitute a vast generalization of the $p$-adic local
zeta functions.

In \cite{V-Z2}\ W. Veys and W. A. Z\'{u}\~{n}iga-Galindo extended Igusa's
theory to the case of rational functions, or, more generally, meromorphic
functions $f/g$, with coefficients in a local field of characteristic zero.
This generalization is far from being straightforward due to the fact that
several new geometric phenomena appear. Also, the oscillatory integrals have
two different asymptotic expansions: the `usual'\ one when the norm of the
parameter tends to infinity, and another one when the norm of the parameter
tends to zero. The first asymptotic expansion is controlled by the poles (with
negative real parts) of all the twisted local zeta functions associated to the
meromorphic functions $f/g-c$, for certain special values $c$. The second
expansion is controlled by the poles (with positive real parts) of all the
twisted local zeta functions associated to $f/g$. There are several
mathematical and physical motivations for studying these new local zeta
functions. For instance, S. Gusein-Zade, I. Luengo and A. Melle-Hern\'{a}ndez
have studied the complex monodromy (and A'Campo zeta functions attached to it)
of meromorphic functions, see e.g. \cite{G-LM}, \cite{G-LM1}, \cite{G-LM2}.
This work drives naturally to ask about the existence of local zeta functions
with poles related with the monodromies studied by the mentioned authors. From
a physical perspective, the local zeta functions attached to meromorphic
functions are very alike to parametric Feynman integrals and to $p$-adic
string amplitudes, see e.g. \cite{Be-Bro}, \cite{Bo-We}, \cite{B-F-O-W},
\cite{Marcolli}. For instance in \cite[Section 3.15]{Marcolli}, M. Marcolli
pointed out explicitly that the motivic Igusa zeta function constructed by J.
Denef and F. Loeser may provide the right tool for a motivic formulation of
the dimensionally regularized parametric Feynman integrals.

This article aims to study the local zeta functions attached to a rational
function $f/g$ with coefficients in a local field of arbitrary characteristic,
when $f/g$ is non-degenerate with respect to a certain Newton polyhedron. In
\cite{Leon} E. Le\'{o}n-Cardenal and W. A. Z\'{u}\~{n}iga-Galindo studied
similar matters. In this article, we present a more suitable and general
notion of non-degeneracy which allows us to study the local zeta functions
attached to much larger class of rational functions. Our article is organized
as follows. In Section \ref{Sect_1} we summarize some basic aspects about
non-Archimedean local fields and compute some $\pi$-adic integrals that are
needed in the article. In Section \ref{Sect_3} we review some basic aspects
about polyhedral subdivisions and Newton polyhedra, we also introduce a notion
of non-degeneracy for polynomials mappings. It seems that our notion of
non-degeneracy is a new one. In Section \ref{Sect_4} we study the meromorphic
continuation for multivariate local zeta functions attached to non-degenerate
mappings. These local zeta functions were introduced by L. Loeser in
\cite{Loeser}. We give a very general geometric description of the poles of
the meromorphic continuation of these functions, see Theorem \ref{Theorem1}.
Our results extend some of the well-known results due to Hoornaert and Denef
\cite{DenefHoor}, and Bories \cite{Bories}. In Section \ref{Sect_5} we study
the local zeta functions attached to rational functions satisfying a suitable
non-degeneracy condition. In Theorem \ref{Theorem2}, we give a geometric
description of the poles of the meromorphic continuation of these functions.
The real parts of the poles of the meromorphic continuation of these functions
are positive and negative rational numbers. Finally, in Section \ref{Sect_6},
we describe the `smallest positive and largest negative poles' appearing in
the meromorphic continuation of these new local zeta functions, see Theorems
\ref{Theorem3}, \ref{Theorem4}.

\textbf{Acknowledgement.} The authors wish to thank the referee for his/her
careful reading of the original manuscript.

\section{\label{Sect_1}Preliminaries}

In this article we work with a non-discrete locally compact field $K$ of
arbitrary characteristic. We will say that a such field is a
\textit{non-Archimedean local field} of arbitrary characteristic. By a
well-known classification theorem, a non-Archimedean local field is a finite
extension of $\mathbb{Q}_{p}$, the field of $p$-adic numbers, or is
\textit{the field of formal Laurent series} $\mathbb{F}_{q}\left(  \left(
T\right)  \right)  $ over a finite field $\mathbb{F}_{q}$. In the first case
we say that $K$ is \textit{a} $p$\textit{-adic field}. For further details the
reader may consult \cite[Chapter 1]{WeilB}.

Let $K$ be a non-Archimedean local field of arbitrary characteristic and let
$\mathcal{O}_{K}$ be the ring of integers of $K$ and let the residue field of
$K$ be $\mathbb{F}_{q}$, the finite field with $q=p^{m}$ elements, where $p$
is a prime number. For $z\in K\smallsetminus\left\{  0\right\}  $, let
$ord(z)\in\mathbb{Z}\cup\left\{  +\infty\right\}  $ denote \textit{the
valuation} of $z$, let $|z|_{K}=q^{-ord(z)}$ denote the normalized
\textit{absolute value} (or \textit{norm}), \ and let $ac(z)=z\pi^{-ord(z)}$
denote the \textit{angular component}, where $\pi$ is a fixed uniformizing
parameter of $K$. We extend the norm $|\cdot|_{K}$ to $K^{n}$ by taking
$||(x_{1},\ldots,x_{n})||_{K}:=\max\left\{  |x_{1}|_{K},\ldots,|x_{n}%
|_{K}\right\}  $. Then $(K^{n},||\cdot||_{K})$ is a complete metric space and
the metric topology is equal to the product topology.

Along this paper, vectors will be written in boldface, so for instance we will
write $\mathbf{b}:=(b_{1},\ldots,b_{l})$ where $l$ is a positive integer. For
polynomials we will use $\boldsymbol{x}=(x_{1},\ldots,x_{n})$, thus $h\left(
\boldsymbol{x}\right)  =$ $h(x_{1},\ldots,x_{n})$. For each $n$-tuple of
natural numbers $\boldsymbol{k}=(k_{1},\ldots,k_{n})\in\mathbb{N}^{n}$, we
will denote by $\sigma(\boldsymbol{k})$ the sum of all its components i.e.
$\sigma(\boldsymbol{k})=k_{1}+k_{2}+\ldots+k_{n}$. Furthermore, we will use
the notation $|d\boldsymbol{x}|_{K}$ for the Haar measure on $\left(
K^{n},+\right)  $ normalized so that the measure of $\mathcal{O}_{K}^{n}$ is
equal to one. In dimension one, we will use the notation $|dx|_{K}$.

\subsection{\label{Sect_2}Multivariate local zeta functions}

We denote by $\mathcal{S}(K^{n})$ the $\mathbb{C}$-vector space consisting of
all $\mathbb{C}$-valued locally constant functions over $K^{n}$ with compact
support. An element of $\mathcal{S}(K^{n})$ is called a\textit{
Bruhat-Schwartz function} or a \textit{test function}. Along this article we
work with a polynomial mapping $\boldsymbol{h}=(h_{1},\ldots,h_{r}%
):K^{n}\rightarrow K^{r}$ such that each $h_{i}(\boldsymbol{x})$ is a
non-constant polynomial in $\mathcal{O}_{K}[x_{1},\ldots,x_{n}]\backslash
\pi\mathcal{O}_{K}[x_{1},\ldots,x_{n}]$ and $r\leq n$. Let $\Phi$ a
Bruhat-Schwartz function and let $\boldsymbol{s}=(s_{1},\ldots,s_{r}%
)\in\mathbb{C}^{r}$. The local zeta function associated to $\Phi$ and
$\boldsymbol{h}$ is defined as
\[
Z_{\Phi}(\boldsymbol{s},\boldsymbol{h})=\int\limits_{{K}^{n}\backslash D_{K}%
}\Phi(\boldsymbol{x})\prod_{i=1}^{r}\left\vert h_{i}(\boldsymbol{x}%
)\right\vert _{K}^{s_{i}}|d\boldsymbol{x}|_{K}%
\]
for $\operatorname{Re}(s_{i})>0$ for all $i$, where $D_{K}:=\cup_{i\in\left\{
1,\ldots,r\right\}  }\left\{  \boldsymbol{x}\in K^{n};h_{i}(\boldsymbol{x}%
)=0\right\}  $. Notice that $Z_{\Phi}(\boldsymbol{s},\boldsymbol{h})$
converges for $\operatorname{Re}(s_{i})>0$ for all $i=1,\ldots,r$. If $\Phi$
is the characteristic function of $\mathcal{O}_{K}^{n}$ we use the notation
$Z(\boldsymbol{s},\boldsymbol{h})$ instead of $Z_{\Phi}(\boldsymbol{s}%
,\boldsymbol{h})$. In the case of polynomial mappings with coefficients in a
local field of characteristic zero (not necessarily non-Archimedean and
without the condition $r\leq n$), the theory of local zeta functions of type
$Z_{\Phi}(\boldsymbol{s},\boldsymbol{h})$ was established by F. Loeser in
\cite{Loeser}.

Denote by $\overline{\boldsymbol{x}}$ the image of an element of
$\mathcal{O}_{K}^{n}$ under the canonical homomorphism $\mathcal{O}_{K}%
^{n}\rightarrow\mathcal{O}_{K}^{n}/(\pi\mathcal{O}_{K})^{n}\cong\mathbb{F}%
_{q}^{n}$, we call $\overline{\boldsymbol{x}}$ \textit{the reduction modulo}
$\pi$ of $\boldsymbol{x}$. Given $h(\boldsymbol{x})\in\mathcal{O}_{K}%
[x_{1},\ldots,x_{n}]$, we denote by $\overline{h}(\boldsymbol{x})$ the
polynomial obtained by reducing modulo $\pi$ the coefficients of
$h(\boldsymbol{x})$. Furthermore if $\boldsymbol{h}=(h_{1},\ldots,h_{r})$ is a
polynomial mapping with $h_{i}\in\mathcal{O}_{K}[x_{1},\ldots,x_{n}]$ for all
$i$, then $\overline{\boldsymbol{h}}:=(\overline{h}_{1},\ldots,\overline
{h}_{r})$ denotes the polynomial mapping obtained by reducing modulo $\pi$ all
the components of $\boldsymbol{h}$.

\subsection{Some $\pi$-adic integrals}

Let $\boldsymbol{h}=(h_{1},h_{2},\ldots,h_{r})$ be a polynomial mapping as
above. For $\boldsymbol{a}\in({\mathcal{O}}_{K}^{\times})^{n}$, we set
\begin{equation}
J_{\boldsymbol{a}}(\boldsymbol{s},\boldsymbol{h}):=\int\limits_{\boldsymbol{a}%
+(\pi{\mathcal{O}}_{K})^{n}\smallsetminus D_{K}}\text{ }\prod_{i=1}^{r}%
|h_{i}(\boldsymbol{x})|_{K}^{s_{i}}|d\boldsymbol{x}|_{K}, \label{Integral_J}%
\end{equation}
$\boldsymbol{s}=(s_{1},\dots s_{r})\in\mathbb{C}^{r}$ with $\operatorname{Re}%
(s_{i})>0$, $i=1,\ldots,r$.

The Jacobian matrix of $\boldsymbol{h}$ at $\boldsymbol{a}$ is $Jac\left(
\boldsymbol{h},\boldsymbol{a}\right)  =\left[  \frac{\partial h_{i}}{\partial
x_{j}}\left(  \boldsymbol{a}\right)  \right]  _{\substack{1\leq i\leq r\\1\leq
j\leq n}}$ with $r\leq n$. In a similar way we define the Jacobian matrix of
$\overline{\boldsymbol{h}}$ at $\overline{\boldsymbol{a}}$. For every
non-empty subset $I$ of $\left\{  1,\ldots,r\right\}  $ we set $\ Jac\left(
\overline{\boldsymbol{h}}_{I},\overline{\boldsymbol{a}}\right)  :=\left[
\frac{\partial\bar{h}_{i}}{\partial x_{j}}\left(  \overline{\boldsymbol{a}%
}\right)  \right]  _{\substack{i\in I\\1\leq j\leq n}}$ .

\begin{lemma}
\label{Lemma1}Let $I$ be the subset of $\left\{  1,\ldots,r\right\}  $ such
that $\overline{h}_{i}(\overline{\boldsymbol{a}})=0\Leftrightarrow i\in I$.
\ Assume that $\boldsymbol{a}\notin D_{K}$ and that $Jac\left(  \overline
{\boldsymbol{h}}_{I},\overline{\boldsymbol{a}}\right)  $ has rank $m=Card(I)$
for $I\neq\varnothing$. Then $J_{\boldsymbol{a}}(\boldsymbol{s},\boldsymbol{h}%
)$ equals
\[
\left\{
\begin{array}
[c]{lll}%
q^{-n} & \text{if} & I=\varnothing\\
&  & \\
q^{-n}\prod\limits_{i\in I}\frac{(q-1)q^{-1-s_{i}}}{1-q^{-1-s_{i}}} &
\text{if} & I\neq\varnothing.
\end{array}
\right.
\]

\end{lemma}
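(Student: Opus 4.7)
The plan is to reduce each case to a direct Haar measure computation on the polydisc $\boldsymbol{a}+(\pi\mathcal{O}_K)^n$. In the case $I=\varnothing$ one has $\overline{h}_i(\overline{\boldsymbol{a}})\neq 0$ for every $i$, and every $\boldsymbol{x}$ in the polydisc reduces to $\overline{\boldsymbol{a}}$, so $\overline{h}_i(\overline{\boldsymbol{x}})\neq 0$ and hence $|h_i(\boldsymbol{x})|_K=1$. The polydisc is therefore disjoint from $D_K$, the integrand is identically $1$, and $J_{\boldsymbol{a}}(\boldsymbol{s},\boldsymbol{h})$ equals the Haar measure $q^{-n}$ of the polydisc.

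For $I\neq\varnothing$, write $I=\{i_1,\ldots,i_m\}$. Since $Jac(\overline{\boldsymbol{h}}_I,\overline{\boldsymbol{a}})$ has rank $m$, after permuting $x_1,\ldots,x_n$ we may assume that the $m\times m$ minor $[\partial\overline{h}_{i_k}/\partial x_j(\overline{\boldsymbol{a}})]_{1\le k,j\le m}$ is invertible over $\mathbb{F}_q$. The key step is to introduce the change of variables
\[
y_k=h_{i_k}(\boldsymbol{x})\ (k=1,\ldots,m),\qquad y_k=x_k-a_k\ (k=m+1,\ldots,n).
\]
Its Jacobian matrix at $\boldsymbol{a}$ is block lower triangular, with diagonal blocks given by the above minor and by $I_{n-m}$, so it has unit determinant in $\mathcal{O}_K$. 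A Hensel-type inverse function theorem, which is valid over any non-Archimedean local field because the linear part is unimodular, shows that this map is an analytic bijection from $\boldsymbol{a}+(\pi\mathcal{O}_K)^n$ onto $(\pi\mathcal{O}_K)^n$, with Jacobian of absolute value $1$ throughout. Moreover, for $i\notin I$ one has $|h_i(\boldsymbol{x})|_K=1$ on the entire polydisc, so those factors drop out of the integrand.

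After the change of variables the integral decouples as
\[
J_{\boldsymbol{a}}(\boldsymbol{s},\boldsymbol{h})=q^{-(n-m)}\prod_{k=1}^{m}\int_{\pi\mathcal{O}_K\smallsetminus\{0\}}|y|_K^{s_{i_k}}|dy|_K,
\]
where $q^{-(n-m)}$ is the Haar measure of $(\pi\mathcal{O}_K)^{n-m}$. Each one-dimensional integral is computed by the stratification $\pi\mathcal{O}_K\smallsetminus\{0\}=\bigsqcup_{\ell\ge 1}\pi^{\ell}\mathcal{O}_K^{\times}$, on which $|y|_K=q^{-\ell}$ and the Haar measure equals $(1-q^{-1})q^{-\ell}$; summing the resulting geometric series in $\operatorname{Re}(s_{i_k})>0$ yields $(1-q^{-1})q^{-1-s_{i_k}}/(1-q^{-1-s_{i_k}})$. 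Writing $(1-q^{-1})=(q-1)/q$ and combining the $m$ factors of $q^{-1}$ with $q^{-(n-m)}$ to form $q^{-n}$ produces the stated formula.

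The main obstacle is the change-of-variables step: one must justify that the proposed map is a measure-preserving bijection of the polydisc onto $(\pi\mathcal{O}_K)^n$ in arbitrary residue characteristic. This is a standard Hensel/contraction-mapping argument since the Jacobian at $\boldsymbol{a}$ is a unit, but some care is needed to check both injectivity on the polydisc and surjectivity onto the full target.
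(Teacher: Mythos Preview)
Your proof is correct and follows essentially the same strategy as the paper's: a unimodular change of variables turning the $h_{i_k}$ into coordinate functions, followed by a product of one-variable geometric-series computations. The only cosmetic difference is that the paper first performs the affine shift $\boldsymbol{x}=\boldsymbol{a}+\pi\boldsymbol{u}$ (so the computation lives on $\mathcal{O}_K^n$) and then takes $\phi_i(\boldsymbol{u})=\bigl(h_i(\boldsymbol{a}+\pi\boldsymbol{u})-h_i(\boldsymbol{a})\bigr)/\pi$ for $i\in I$, citing \cite[Lemma~7.4.3]{Igusa} for the measure-preserving bijection of $\mathcal{O}_K^n$; you instead set $y_k=h_{i_k}(\boldsymbol{x})$ directly on the polydisc, which is the same map up to the translation by $\bigl(h_{i_1}(\boldsymbol{a}),\ldots,h_{i_m}(\boldsymbol{a}),0,\ldots,0\bigr)\in(\pi\mathcal{O}_K)^n$ and the scaling by $\pi$.
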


\begin{proof}
By change of variables we get
\[
J_{\boldsymbol{a}}(\boldsymbol{s},\boldsymbol{h})=q^{-n}\int
\limits_{{\mathcal{O}}_{K}^{n}\smallsetminus\cup_{i\in\left\{  1,\ldots
,r\right\}  }\left\{  \boldsymbol{x}\in K^{n};h_{i}(\pi\boldsymbol{x}%
+\boldsymbol{a})=0\right\}  }\text{ }\prod_{i=1}^{r}|h_{i}(\pi\boldsymbol{x}%
+\boldsymbol{a})|_{K}^{s_{i}}|d\boldsymbol{x}|_{K}.
\]
We first consider the case $I=\varnothing$. Then $h_{i}(\boldsymbol{a}%
)\not \equiv 0\operatorname{mod}\pi$, thus $|h_{i}(\pi\boldsymbol{x}%
+\boldsymbol{a})|_{K}=1$, and $J_{\boldsymbol{a}}(\boldsymbol{s}%
,\boldsymbol{h})=q^{-n}$. In the case $I\neq\varnothing$, by reordering $I$
(if necessary) we can suppose that $I=\left\{  1,\ldots,m\right\}  $ with
$m\leq r$. Integral $J_{\boldsymbol{a}}(\boldsymbol{s},\boldsymbol{h})$ is
computed by changing variables as $\boldsymbol{y}=\phi(\boldsymbol{x})$ with
\[
\boldsymbol{y}_{i}=\phi_{i}(\boldsymbol{x}):=\left\{
\begin{array}
[c]{lll}%
\frac{h_{i}\left(  \boldsymbol{a}+\pi\boldsymbol{x}\right)  -h_{i}\left(
\boldsymbol{a}\right)  }{\pi} & \text{if} & i=1,\ldots,m\\
&  & \\
x_{i} & \text{if} & i\geq m+1.
\end{array}
\right.
\]
By using that rank of $Jac(\overline{\boldsymbol{h}}_{I},\overline
{\boldsymbol{a}})$ is $m$ we get that $det\left[  \frac{\partial\phi_{i}%
}{\partial x_{j}}(\mathbf{0})\right]  _{\substack{1\leq i\leq n\\1\leq j\leq
n}}\not \equiv 0\operatorname{mod}\pi$, which implies that $\boldsymbol{y}%
=\phi(\boldsymbol{x})$ gives a measure-preserving map from ${\mathcal{O}}%
_{K}^{n}$ to itself (see e.g. \cite[Lemma 7.4.3]{Igusa}), hence
\[
J_{\boldsymbol{a}}(\boldsymbol{s},\boldsymbol{h})=q^{-n}\prod_{i=1}^{m}%
\int\limits_{{\mathcal{O}}_{K}\backslash\left\{  \pi y_{i}+h_{i}%
(\boldsymbol{a})=0\right\}  }|\pi y_{i}+h_{i}(\boldsymbol{a})|_{K}^{s_{i}%
}|dy_{i}|_{K}=:q^{-n}\prod_{i=1}^{m}J_{\boldsymbol{a}}^{\prime}(y_{i}).
\]
To prove the announced formula we compute integrals $J_{\boldsymbol{a}%
}^{\prime}(y_{i})$. Now, since $h_{i}(\boldsymbol{a})\equiv0\operatorname{mod}%
\pi$, by taking $z_{i}=\pi y_{i}+h_{i}(\boldsymbol{a})$ in $J_{\boldsymbol{a}%
}^{\prime}(y_{i})$, we obtain
\[
J_{\boldsymbol{a}}^{\prime}(y_{i})=q^{-s_{i}}\int\limits_{{\mathcal{O}}%
_{K}\backslash\left\{  0\right\}  }|z_{i}|_{K}^{s_{i}}|dz_{i}|_{K}%
=\frac{(q-1)q^{-1-si}}{1-q^{-1-s_{i}}}.
\]
Therefore
\begin{equation}
J_{\boldsymbol{a}}(\boldsymbol{s},\boldsymbol{h})=%
\begin{cases}
q^{-n} & \text{$I=\varnothing$}\\
q^{-n}\prod_{i\in I}\frac{(q-1)q^{-1-s_{i}}}{1-q^{-1-s_{i}}} & \text{$I\neq
\varnothing$}.
\end{cases}
\label{eqna1}%
\end{equation}

\end{proof}

\begin{remark}
\label{Note1}If in integral (\ref{Integral_J}), we replace $h_{i}%
(\boldsymbol{x})$\ by $h_{i}(\boldsymbol{x})+\pi g_{i}\left(  \boldsymbol{x}%
\right)  $, where each $g_{i}\left(  \boldsymbol{x}\right)  $ is a polynomial
with coefficients in ${\mathcal{O}}_{K}$, then the formulas given in Lemma
\ref{Lemma1} are valid.
\end{remark}

For every subset $I\subseteq\left\{  1,\ldots,r\right\}  $ we set%
\begin{equation}
\overline{V}_{I}:=\left\{  \overline{\boldsymbol{z}}\in(\mathbb{F}_{q}%
^{\times})^{n};\ \overline{h}_{i}(\overline{\boldsymbol{z}})=0\Leftrightarrow
i\in I\right\}  . \label{Notation}%
\end{equation}
To simplify the notation we will denote $\overline{V}_{\left\{  1,\ldots
,r\right\}  }$ as $\overline{V}$.

\begin{lemma}
\label{Lemma3} Let $\boldsymbol{h}=(h_{1},\ldots,h_{r})$ with $r\leq n$, be as
before. Assume that for all $I\neq\varnothing$ if $\overline{V}_{I}%
\neq\varnothing$, then
\[
rank_{\mathbb{F}_{q}}\left[  \frac{\partial\overline{h}_{i}}{\partial x_{j}%
}\left(  \overline{\boldsymbol{a}}\right)  \right]  _{i\in I,\text{ }%
j\in\left\{  1,\ldots,n\right\}  }=Card(I)\text{, for any }\overline
{\boldsymbol{a}}\in\overline{V}_{I}\text{.}%
\]
Set
\[
L(\boldsymbol{s},\boldsymbol{h}):=\int\limits_{({\mathcal{O}}_{K}^{\times
})^{n}\backslash D_{K}}\prod_{i=1}^{r}|h_{i}(\boldsymbol{x})|_{K}^{s_{i}%
}|d\boldsymbol{x}|_{K},\text{ }\boldsymbol{s}=(s_{1},\dots s_{r})\in
\mathbb{C}^{r},
\]
for $\operatorname{Re}\boldsymbol{(}s_{i})>0$ for all $i$. Then, with the
convention that $\prod_{i\in I}\frac{(q-1)q^{-1-s_{i}}}{1-q^{-1-s_{i}}}=1$
when $I=\varnothing$, we have
\[
L(\boldsymbol{s},\boldsymbol{h})=q^{-n}\sum_{I\subseteq\left\{  1,\ldots
,r\right\}  }Card(\overline{V}_{I})\prod_{i\in I}\frac{(q-1)q^{-1-s_{i}}%
}{1-q^{-1-s_{i}}}.
\]

\end{lemma}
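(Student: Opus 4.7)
The plan is to partition the integration domain $({\mathcal{O}}_K^\times)^n \setminus D_K$ into residue classes modulo $\pi$ and then identify each piece with an integral of the type computed in Lemma \ref{Lemma1}.

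First I would choose, for each $\overline{\boldsymbol{a}} \in (\mathbb{F}_q^\times)^n$, a lift $\boldsymbol{a} \in ({\mathcal{O}}_K^\times)^n$, and write
\[
({\mathcal{O}}_K^\times)^n \setminus D_K \;=\; \bigsqcup_{\overline{\boldsymbol{a}} \in (\mathbb{F}_q^\times)^n} \bigl(\boldsymbol{a} + (\pi {\mathcal{O}}_K)^n\bigr) \setminus D_K,
\]
a disjoint union since distinct residue classes are disjoint. Note that because each $h_i$ has coefficients in ${\mathcal{O}}_K$, the value $\overline{h}_i(\overline{\boldsymbol{a}})$ depends only on $\overline{\boldsymbol{a}}$, so the condition defining the set $I = I(\overline{\boldsymbol{a}}) := \{ i : \overline{h}_i(\overline{\boldsymbol{a}}) = 0\}$ is constant on each class, and $\boldsymbol{a} \in D_K$ would force $|h_i(\boldsymbol{a})|_K = 0$ for some $i$, which cannot happen when $I(\overline{\boldsymbol{a}}) = \varnothing$; one must discard the (measure-zero) zero set $D_K$ only on the classes with $I \neq \varnothing$. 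Consequently
\[
L(\boldsymbol{s},\boldsymbol{h}) \;=\; \sum_{\overline{\boldsymbol{a}} \in (\mathbb{F}_q^\times)^n} J_{\boldsymbol{a}}(\boldsymbol{s},\boldsymbol{h}),
\]
with $J_{\boldsymbol{a}}(\boldsymbol{s},\boldsymbol{h})$ the integral defined in (\ref{Integral_J}).

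Next, I would group the sum according to the value of $I(\overline{\boldsymbol{a}})$: by definition the number of classes with $I(\overline{\boldsymbol{a}}) = I$ is exactly $\mathrm{Card}(\overline{V}_I)$. The hypothesis of the lemma guarantees that whenever $\overline{V}_I \neq \varnothing$, the Jacobian matrix $Jac(\overline{\boldsymbol{h}}_I, \overline{\boldsymbol{a}})$ has rank $\mathrm{Card}(I)$ at every $\overline{\boldsymbol{a}} \in \overline{V}_I$, so Lemma \ref{Lemma1} applies uniformly on every such class and yields
\[
J_{\boldsymbol{a}}(\boldsymbol{s},\boldsymbol{h}) \;=\; q^{-n} \prod_{i \in I} \frac{(q-1) q^{-1-s_i}}{1 - q^{-1-s_i}},
\]
with the convention that the empty product is $1$, covering both the $I = \varnothing$ and $I \neq \varnothing$ cases in one formula.

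Finally, summing over $I \subseteq \{1,\ldots,r\}$ and using that classes with $I(\overline{\boldsymbol{a}})$ outside $\{I : \overline{V}_I \neq \varnothing\}$ contribute zero (since $\mathrm{Card}(\overline{V}_I) = 0$ there) gives the claimed formula for $L(\boldsymbol{s},\boldsymbol{h})$. The argument is essentially bookkeeping once Lemma \ref{Lemma1} is in hand; the only subtle point, and thus the main thing to check carefully, is that the rank hypothesis of Lemma \ref{Lemma1} is satisfied at \emph{every} lift $\boldsymbol{a}$ of every $\overline{\boldsymbol{a}} \in \overline{V}_I$, which follows immediately from the uniform rank assumption in the statement since the rank of the reduced Jacobian depends only on $\overline{\boldsymbol{a}}$.
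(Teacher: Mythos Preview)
Your approach is essentially the same as the paper's: partition $(\mathcal{O}_K^\times)^n$ into residue classes, group the resulting integrals $J_{\boldsymbol{a}}$ according to $I(\overline{\boldsymbol{a}})$, and apply Lemma~\ref{Lemma1} to each. The bookkeeping and final summation are exactly as in the paper.

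There is one point you gloss over. Lemma~\ref{Lemma1} has \emph{two} hypotheses: the rank condition on $Jac(\overline{\boldsymbol{h}}_I,\overline{\boldsymbol{a}})$, and the requirement $\boldsymbol{a}\notin D_K$. You carefully verify the first but say nothing about the second when $I\neq\varnothing$ (you only note that $\boldsymbol{a}\notin D_K$ is automatic when $I=\varnothing$). The paper handles this explicitly: its Claim argues that, since $h_1\cdots h_r$ is a nonzero polynomial, it cannot vanish identically on any ball $\boldsymbol{a}+(\pi\mathcal{O}_K)^n$, so one may always recenter the ball at some $\boldsymbol{b}\notin D_K$ with the same reduction $\overline{\boldsymbol{b}}=\overline{\boldsymbol{a}}$. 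You should either insert this argument or note that the computation in the proof of Lemma~\ref{Lemma1} in fact goes through verbatim when $h_i(\boldsymbol{a})=0$ for some $i\in I$ (the substitution $z_i=\pi y_i+h_i(\boldsymbol{a})$ still gives the same integral). Either way, your claim that the rank condition is ``the only subtle point'' is not quite right as written.
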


\begin{proof}
Note that $L(\boldsymbol{s},\boldsymbol{h})$ can be expressed as a finite sum
of integrals%
\[
J_{\boldsymbol{a}}(\boldsymbol{s},\boldsymbol{h})=\int
\limits_{\substack{\boldsymbol{a}+(\pi{\mathcal{O}}_{K})^{n}\backslash D_{K}%
}}\prod_{i=1}^{r}|h_{i}(\boldsymbol{x})|_{K}^{s_{i}}|d\boldsymbol{x}|_{K},
\]
where $\boldsymbol{a}$ runs through a fixed set of representatives
$\mathcal{R}$ in $\left(  {\mathcal{O}}_{K}^{\times}\right)  ^{n}$ of
$(\mathbb{F}_{q}^{{\mathcal{\times}}})^{n}$. Then $L(\boldsymbol{s}%
,\boldsymbol{h})$ is equals%
\begin{gather*}
\sum_{\overline{\boldsymbol{a}}\in\overline{V}_{\varnothing}}\text{ }%
\int\limits_{\boldsymbol{a}+(\pi{\mathcal{O}}_{K})^{n}\backslash D_{K}}%
\prod_{i=1}^{r}|h_{i}(\boldsymbol{x})|_{K}^{s_{i}}|d\boldsymbol{x}|_{K}\\
+\sum_{\substack{I\subsetneqq\left\{  1,\ldots,r\right\}  \\I\neq\varnothing
}}\text{ }\sum_{\overline{\boldsymbol{a}}\in\overline{V}_{I}}\text{ }%
\int\limits_{\boldsymbol{a}+(\pi{\mathcal{O}}_{K})^{n}\backslash D_{K}}%
\prod_{i=1}^{r}|h_{i}(\boldsymbol{x})|_{K}^{s_{i}}|d\boldsymbol{x}|_{K}\\
+\sum_{\overline{\boldsymbol{a}}\in\overline{V}}\text{ }\int
\limits_{\boldsymbol{a}+(\pi{\mathcal{O}}_{K})^{n}\backslash D_{K}}\text{
}\prod_{i=1}^{r}|h_{i}(\boldsymbol{x})|_{K}^{s_{i}}\left\vert d\boldsymbol{x}%
\right\vert _{K}\\
=:J(\boldsymbol{s},\overline{V}_{\varnothing})+\sum_{\substack{I\subsetneqq
\left\{  1,\ldots,r\right\}  \\I\neq\varnothing}}\text{ }J(\boldsymbol{s}%
,\overline{V}_{I})+J(\boldsymbol{s},\overline{V}),
\end{gather*}
with the convention that if $\overline{V}_{I}=\varnothing$, then
$\sum_{\overline{\boldsymbol{a}}\in\overline{V}_{I}}\int_{\boldsymbol{a}%
+(\pi{\mathcal{O}}_{K})^{n}\backslash D_{K}}\cdot=0$. Notice that%
\begin{equation}
J(\boldsymbol{s},\overline{V}_{\varnothing})=q^{-n}Card(\overline
{V}_{\varnothing}).\label{J_V_O}%
\end{equation}
Thus we may assume that $I\neq\varnothing$. In the calculation of
$J(\boldsymbol{s},\overline{V}_{I})$ we use the following result:

\textbf{Claim. }$\ $\textbf{\ }%
\[
\sum_{\overline{\boldsymbol{a}}\in\overline{V}_{I}}\hspace{0.2cm}%
\int\limits_{\boldsymbol{a}+(\pi{\mathcal{O}}_{K})^{n}\backslash D_{K}}%
\prod_{i=1}^{r}|h_{i}(\boldsymbol{x})|_{K}^{s_{i}}|d\boldsymbol{x}|_{K}%
=\sum_{\overline{\boldsymbol{a}}\in\overline{V}_{I}}\hspace{0.2cm}%
\int\limits_{\substack{\boldsymbol{a}+(\pi{\mathcal{O}}_{K})^{n}\backslash
D_{K}\\\boldsymbol{a}\notin D_{K}}}\prod_{i=1}^{r}|h_{i}(\boldsymbol{x}%
)|_{K}^{s_{i}}|d\boldsymbol{x}|_{K}.
\]
The Claim follows from the following reasoning. The analytic mapping
$h_{1}\cdots h_{r}:\boldsymbol{a}+(\pi{\mathcal{O}}_{K})^{n}\rightarrow K$ is
not identically zero, otherwise by \cite[Lemma 2.1.3]{Igusa}, the polynomial
$(h_{1}\cdots h_{r})(\boldsymbol{x})$ would be the constant polynomial zero
contradicting the hypothesis that all the $h_{i}$'s are non-constant
polynomials. Hence there exists an element $\boldsymbol{b}\in\boldsymbol{a}%
+(\pi{\mathcal{O}}_{K})^{n}$ such that $(h_{1}\cdots h_{r})(\boldsymbol{b}%
)\neq0$. Finally, we use the fact that every point of a ball is its center,
which implies that $\boldsymbol{a}+(\pi{\mathcal{O}}_{K})^{n}=\boldsymbol{b}%
+(\pi{\mathcal{O}}_{K})^{n}$.

By using Lemma \ref{Lemma1},%
\begin{equation}
J(\boldsymbol{s},\overline{V}_{I})=q^{-n}Card(\overline{V}_{I})\prod_{i\in
I}\frac{(q-1)q^{-1-s_{i}}}{1-q^{-1-s_{i}}}. \label{J_V_I}%
\end{equation}
The formula for $J(\boldsymbol{s},\overline{V})$ is a special case of
\ formula (\ref{J_V_I}):
\begin{equation}
J(\boldsymbol{s},\overline{V})=q^{-n}Card(\overline{V})\prod_{i\in\left\{
1,\ldots,r\right\}  }\frac{(q-1)q^{-1-s_{i}}}{1-q^{-1-s_{i}}}. \label{J_V}%
\end{equation}

\end{proof}

\begin{remark}
\label{Note2} In integral $L(\boldsymbol{s},\boldsymbol{h})$ we can replace
$\boldsymbol{h}$ by $\boldsymbol{h}+\pi\boldsymbol{g}$, where $\boldsymbol{g}$
is a polynomial mapping over $\mathcal{O}_{K}$, and the formulas given in
Lemma \ref{Lemma3} remain valid.
\end{remark}

\section{\label{Sect_3}Polyhedral Subdivisions of\textit{\ }$\mathbb{R}%
_{+}^{n}$ and Non-degeneracy conditions}

In this section we review, without proofs, some well-known results about
Newton polyhedra and non-degeneracy conditions that we will use along the
article. Our presentation follows closely \cite{WZ1}, \cite{Oka}.

\subsection{Newton polyhedra}

We set $\mathbb{R}_{+}:=\{x\in\mathbb{R};x\geqslant0\}$. Let $G$ be a
non-empty subset of $\mathbb{N}^{n}$. The \textit{Newton polyhedron }%
$\Gamma=\Gamma\left(  G\right)  $ associated to $G$ is the convex hull in
$\mathbb{R}_{+}^{n}$ \ of the set $\cup_{\boldsymbol{m}\in G}\left(
\boldsymbol{m}+\mathbb{R}_{+}^{n}\right)  $. For instance classically one
associates a \textit{Newton polyhedron }$\Gamma\left(  h\right)
$\textit{\ (at the origin) }to a polynomial function $h(\boldsymbol{x}%
)=\sum_{\boldsymbol{m}}c_{\boldsymbol{m}}\boldsymbol{x}^{\boldsymbol{m}}$
($\boldsymbol{x}=\left(  x_{1},\ldots,x_{n}\right)  $, $h(\boldsymbol{0}%
)=\boldsymbol{0}$), where $G=$supp$(h)$ $:=$ $\left\{  \boldsymbol{m}%
\in\mathbb{N}^{n};c_{\boldsymbol{m}}\neq0\right\}  $.\ Further we will
associate more generally a Newton polyhedron to a polynomial mapping.

We fix a Newton polyhedron $\Gamma$\ as above. We first collect some notions
and results about Newton polyhedra that \ will be used in the next sections.
Let $\left\langle \cdot,\cdot\right\rangle $ denote the usual inner product of
$\mathbb{R}^{n}$, and identify \ the dual space of $\mathbb{R}^{n}$ with
$\mathbb{R}^{n}$ itself by means of it.

Let $H$ be the hyperplane $H=\left\{  \boldsymbol{x}\in{\mathbb{R}}%
^{n};\left\langle \boldsymbol{x},\mathbf{b}\right\rangle =c\right\}  $, $H$
determines two closed half-spaces
\[
H^{+}=\left\{  \boldsymbol{x}\in{\mathbb{R}}^{n};\left\langle \boldsymbol{x}%
,\mathbf{b}\right\rangle \geq c\right\}  \text{ and }H^{-}=\left\{
\boldsymbol{x}\in{\mathbb{R}}^{n};\left\langle \boldsymbol{x},\mathbf{b}%
\right\rangle \leq c\right\}  .
\]

We say that $H$ is a \textit{supporting hyperplane} of $\Gamma(h)$ if
$\Gamma(h)\cap H\neq\varnothing$ and $\Gamma(h)$ is contained in one of the
two closed half-spaces determined by $H$. By a \textit{proper face} $\tau$ of
$\Gamma(h)$, we mean a non-empty convex set $\tau$ obtained by intersecting
$\Gamma(h)$ with one of its supporting hyperplanes. By the \textit{faces} of
$\Gamma(h)$ we will mean the proper faces of $\Gamma(h)$ and the whole the
polyhedron $\Gamma(h)$. By \textit{dimension of a face }$\tau$ of $\Gamma(h)$
we mean the dimension of the affine hull of $\tau$, and its
\textit{codimension} is cod$(\tau)=n-\dim(\tau)$, where $\dim(\tau)$ denotes
the dimension of $\tau$. A face of codimension one is called a \textit{facet}.

For $\boldsymbol{a}\in\mathbb{R}_{+}^{n}$, we define
\[
d(\boldsymbol{a},\Gamma)=\min_{x\in\Gamma}\left\langle \boldsymbol{a}%
,\boldsymbol{x}\right\rangle ,
\]
and \textit{the first meet locus }$F(\boldsymbol{a},\Gamma)$ of
$\boldsymbol{a}$ as \
\[
F(\boldsymbol{a},\Gamma):=\{\boldsymbol{x}\in\Gamma;\left\langle
\boldsymbol{a},\boldsymbol{x}\right\rangle =d(\boldsymbol{a},\Gamma)\}.
\]
The first meet locus \ is a face of $\Gamma$. Moreover, if $\boldsymbol{a}%
\neq\boldsymbol{0}$, $F(\boldsymbol{a},\Gamma)$ is a proper face of $\Gamma$.

If $\Gamma=\Gamma\left(  h\right)  $, we define the \textit{face function
}$h_{\boldsymbol{a}}\left(  \boldsymbol{x}\right)  $\textit{\ of
}$h(\boldsymbol{x})$\textit{\ with respect to }$\boldsymbol{a}$ as
\[
h_{\boldsymbol{a}}\left(  \boldsymbol{x}\right)  =h_{F(\boldsymbol{a},\Gamma
)}\left(  \boldsymbol{x}\right)  =\sum_{\boldsymbol{m}\in F(\boldsymbol{a}%
,\Gamma)}c_{\boldsymbol{m}}\boldsymbol{x}^{\boldsymbol{m}}.
\]

In the case of functions having subindices, say $h_{i}(\boldsymbol{x})$, we
will use the notation $h_{i,\boldsymbol{a}}(\boldsymbol{x})$ for the face
function of $h_{i}(\boldsymbol{x})$\ with respect to $\boldsymbol{a}$. Notice
that $h_{\boldsymbol{0}}\left(  \boldsymbol{x}\right)  =h_{F(\boldsymbol{0}%
,\Gamma)}\left(  \boldsymbol{x}\right)  =\sum_{\boldsymbol{m}}%
c_{\boldsymbol{m}}\boldsymbol{x}^{\boldsymbol{m}}$.

\subsection{Polyhedral Subdivisions Subordinate to a Polyhedron}

We define an equivalence relation in \ $\mathbb{R}_{+}^{n}$ by taking
$\boldsymbol{a}\sim\boldsymbol{a}^{\prime}\Leftrightarrow F(\boldsymbol{a}%
,\Gamma)=F(\boldsymbol{a}^{\prime},\Gamma)$. The equivalence classes of
\ $\sim$ are sets of the form
\[
\Delta_{\tau}=\{\boldsymbol{a}\in\mathbb{R}_{+}^{n};F(\boldsymbol{a}%
,\Gamma)=\tau\},
\]
where $\tau$ \ is a face of $\Gamma$.

We recall that the \textit{cone strictly\ spanned} \ by the vectors
$\boldsymbol{a}_{1},\ldots,\boldsymbol{a}_{l}\in\mathbb{R}_{+}^{n}%
\setminus\left\{  0\right\}  $ is the set $\Delta=\left\{  \lambda
_{1}\boldsymbol{a}_{1}+...+\lambda_{l}\boldsymbol{a}_{l};\lambda_{i}%
\in\mathbb{R}_{+}\text{, }\lambda_{i}>0\right\}  $. If $\boldsymbol{a}%
_{1},\ldots,\boldsymbol{a}_{l}$ are linearly independent over $\mathbb{R}$,
$\Delta$ \ is called \ a \textit{simplicial cone}.\ If \ $\boldsymbol{a}%
_{1},\ldots,\boldsymbol{a}_{l}\in\mathbb{Z}^{n}$, we say $\Delta$\ is \ a
\textit{rational cone}. If $\left\{  \boldsymbol{a}_{1},\ldots,\boldsymbol{a}%
_{l}\right\}  $ is a subset of a basis \ of the $\mathbb{Z}$-module
$\mathbb{Z}^{n}$, we call $\Delta$ a \textit{simple cone}.

A precise description of the geometry of the equivalence classes modulo $\sim$
is as follows. Each \textit{facet}\ $\gamma$ of $\Gamma$\ has a unique vector
$\boldsymbol{a}(\gamma)=(a_{\gamma,1},\ldots,a_{\gamma,n})\in\mathbb{N}%
^{n}\mathbb{\setminus}\left\{  0\right\}  $, \ whose nonzero coordinates are
relatively prime, which is perpendicular to $\gamma$. We denote by
$\mathfrak{D}(\Gamma)$ the set of such vectors. The equivalence classes are
rational cones of the form
\[
\Delta_{\tau}=\{\sum\limits_{i=1}^{r}\lambda_{i}\boldsymbol{a}(\gamma
_{i});\lambda_{i}\in\mathbb{R}_{+}\text{, }\lambda_{i}>0\},
\]
where $\tau$ runs through the set of faces of $\Gamma$, and $\gamma_{i}$,
$i=1,\ldots,r$\ are the facets containing $\tau$. We note that $\Delta_{\tau
}=\{0\}$ if and only if $\tau=\Gamma$. The family $\left\{  \Delta_{\tau
}\right\}  _{\tau}$, with $\tau$ running over\ the proper faces of $\Gamma$,
is a partition of $\mathbb{R}_{+}^{n}\backslash\{0\}$; we call this partition
a \textit{\ polyhedral subdivision of }\ $\mathbb{R}_{+}^{n}$%
\ \textit{subordinate} to $\Gamma$. We call $\left\{  \overline{\Delta}_{\tau
}\right\}  _{\tau}$, the family formed by the topological closures of the
$\Delta_{\tau}$, a \textit{\ fan} \textit{subordinate} to $\Gamma$.

Each cone $\Delta_{\tau}$\ can be partitioned \ into a finite number of
simplicial cones $\Delta_{\tau,i}$. In addition, the subdivision can be chosen
such that each $\Delta_{\tau,i}$ is spanned by part of $\mathfrak{D}(\Gamma)$.
Thus from the above considerations we have the following partition of
$\mathbb{R}_{+}^{n}\backslash\{0\}$:%
\[
\mathbb{R}_{+}^{n}\backslash\{0\}=\bigcup\limits_{\tau\text{ }}\left(
\bigcup\limits_{i=1}^{l_{\tau}}\Delta_{\tau,i}\right)  ,
\]
where $\tau$ runs \ over the proper faces of $\Gamma$, and each $\Delta
_{\tau,i}$ \ is a simplicial cone contained in $\Delta_{\tau}$.\ We will say
that $\left\{  \Delta_{\tau,i}\right\}  $ is a \textit{simplicial polyhedral
subdivision of }\ $\mathbb{R}_{+}^{n}$\ \textit{subordinate} to $\Gamma$, and
that $\left\{  \overline{\Delta}_{\tau,i}\right\}  $ is a \textit{simplicial
fan} \textit{subordinate} to $\Gamma$.

By adding new rays , each simplicial cone can be partitioned further into a
finite number of simple cones. In this way we obtain a \textit{simple
polyhedral subdivision} of $\mathbb{R}_{+}^{n}$\ \textit{subordinate} to
$\Gamma$, and a \textit{simple fan} \textit{subordinate} to $\Gamma$ (or a
\textit{complete regular fan}) (see e.g. \cite{K-M-S}).

\subsection{The Newton polyhedron associated to a polynomial mapping}

Let $\boldsymbol{h}=(h_{1},\ldots,h_{r})$, $\boldsymbol{h}\left(
\boldsymbol{0}\right)  =0$, be a non-constant polynomial mapping. In this
article we associate to $\boldsymbol{h}$ a Newton polyhedron $\Gamma\left(
\boldsymbol{h}\right)  :=\Gamma\left(
{\textstyle\prod\nolimits_{i=1}^{r}}
h_{i}\left(  \boldsymbol{x}\right)  \right)  $. From a geometrical point of
view, $\Gamma\left(  \boldsymbol{h}\right)  $\ is the Minkowski sum of the
$\Gamma\left(  h_{i}\right)  $, for $i=1,\cdots,r$, (see e.g. \cite{Oka},
\cite{Sturmfels}). By using the results previously presented, we can associate
to $\Gamma\left(  \boldsymbol{h}\right)  $ a simplicial polyhedral subdivision
$\mathcal{F}\left(  \boldsymbol{h}\right)  $ of $\mathbb{R}_{+}^{n}$
subordinate to $\Gamma\left(  \boldsymbol{h}\right)  $.

\begin{remark}
\label{remark0}A basic fact\ about the Minkowski sum \ operation is the
additivity of the faces. From this fact follows:

\noindent(1) $F\left(  \boldsymbol{a},\Gamma\left(  \boldsymbol{h}\right)
\right)  =%
{\textstyle\sum\nolimits_{j=1}^{r}}
F\left(  \boldsymbol{a},\Gamma\left(  h_{j}\right)  \right)  $, for
$\boldsymbol{a}\in\mathbb{R}_{+}^{n}$\ ;

\noindent(2) $d\left(  \boldsymbol{a},\Gamma\left(  \boldsymbol{h}\right)
\right)  =%
{\textstyle\sum\nolimits_{j=1}^{r}}
d\left(  \boldsymbol{a},\Gamma\left(  h_{j}\right)  \right)  $, for
$\boldsymbol{a}\in\mathbb{R}_{+}^{n}$\ ;

\noindent(3) let $\tau\ $be a proper face of $\Gamma\left(  \boldsymbol{h}%
\right)  $, and let $\tau_{j}$ be proper face of $\Gamma\left(  h_{j}\right)
$, for $i=1,\cdots,r$. If $\tau=%
{\textstyle\sum\nolimits_{j=1}^{r}}
\tau_{j}$, then $\Delta_{\tau}\subseteq\overline{\Delta}_{\tau_{j}}$, for
$i=1,\cdots,r$.
\end{remark}

\begin{remark}
Note that the equivalence relation,
\[
\boldsymbol{a}\sim\boldsymbol{a}^{\prime}\Leftrightarrow F(a,\Gamma\left(
\boldsymbol{h}\right)  )=F(\boldsymbol{a}^{\prime},\Gamma\left(
\boldsymbol{h}\right)  )\text{,}%
\]
used in the construction of a polyhedral subdivision of $\mathbb{R}_{+}^{n}%
$\ subordinate to\ $\Gamma\left(  \boldsymbol{h}\right)  $ can be equivalently
defined in the following form:%
\[
\boldsymbol{a}\sim\boldsymbol{a}^{\prime}\Leftrightarrow F(\boldsymbol{a}%
,\Gamma\left(  h_{j}\right)  )=F(\boldsymbol{a}^{\prime},\Gamma\left(
h_{j}\right)  )\text{, for each }j=1,\ldots,r.
\]
This last definition is used in Oka's book \cite{Oka}.
\end{remark}

\subsection{Non-degeneracy Conditions}

For $K=%
\mathbb{Q}
_{p},$ Denef and Hoornaert in \cite[Theorem 4.2]{DenefHoor} gave an explicit
formula for $Z(\boldsymbol{s},\boldsymbol{h})$, in the case $r=1$ with
$\boldsymbol{h}$ a non-degenerate polynomial with respect to its Newton
polyhedron $\Gamma(\boldsymbol{h})$. This explicit formula can be generalized
to the case $r\geq1$ by using the condition of non-degeneracy for polynomial
mappings introduced here.

\begin{definition}
\label{Non_Degeneracy_defd}Let $\boldsymbol{h}=(h_{1},\ldots,h_{r})$,
$\boldsymbol{h}\left(  \boldsymbol{0}\right)  =0$, be a polynomial mapping
with $r\leq n$ as in Subsection \ref{Sect_2} and let $\Gamma\left(
\boldsymbol{h}\right)  $ be the Newton polyhedron of $\boldsymbol{h}$ at the
origin. The mapping $\boldsymbol{h}$ is called \textit{non-degenerate over
}$\mathbb{F}_{q}$\textit{ with respect to} $\Gamma\left(  \boldsymbol{h}%
\right)  $, if for every vector $\boldsymbol{k}\in\mathbb{R}_{+}^{n}$ and for
any non-empty subset $I\subseteq\left\{  1,\ldots,r\right\}  $, it verifies
that
\begin{equation}
rank_{\mathbb{F}_{q}}\left[  \frac{\partial\overline{h}_{i,\boldsymbol{k}}%
}{\partial x_{j}}\left(  \overline{\boldsymbol{z}}\right)  \right]  _{i\in
I,\text{ }j\in\left\{  1,\ldots,n\right\}  }=Card(I) \label{Condition_2}%
\end{equation}
for any
\begin{equation}
\overline{\boldsymbol{z}}\in\left\{  \overline{\boldsymbol{z}}\in\left(
\mathbb{F}_{q}^{\times}\right)  ^{n};\overline{h}_{i,\boldsymbol{k}}%
(\overline{\boldsymbol{z}})=0\Leftrightarrow i\in I\right\}  .
\label{Condition_1}%
\end{equation}

\end{definition}

We notice that above notion is different to the those introduced in
\cite{V-Z}, \cite{WZ1}. The notion introduced here is similar to the usual
notion given by Khovansky, see \cite{K}, \cite{Oka}. For a discussion about
the relation between Khovansky's non-degeneracy notion and other similar
notions we refer the reader to \cite{V-Z}.

Let $\Delta$ be a rational simplicial cone spanned by $\boldsymbol{w}_{i}$,
$i=1,\ldots,e_{\Delta}$. We define the \textit{barycenter} of $\Delta$ as
$b(\Delta)=\sum_{i=1}^{e_{\Delta}}\boldsymbol{w}_{i}$. Set $b(\left\{
\mathbf{0}\right\}  ):=\mathbf{0}$.

\begin{remark}
(i)Let $\mathcal{F}(\boldsymbol{h})$ be a simplicial polyhedral subdivision of
$\mathbb{R}_{+}^{n}$ subordinate to $\Gamma\left(  \boldsymbol{h}\right)  $.
Then, it is sufficient to verify the condition given in Definition
\ref{Non_Degeneracy_defd} for $\boldsymbol{k=}b(\Delta)$ with $\Delta
\in\mathcal{F}(\boldsymbol{h})\cup\left\{  \boldsymbol{0}\right\}  $.

(ii) Notice that our notion of non-degeneracy agrees, in the case $K=%
\mathbb{Q}
_{p},$ $r=1$, with the corresponding notion in \cite{DenefHoor}.
\end{remark}

\begin{example}
\label{Ejemplo1}Set $\boldsymbol{h}=(h_{1},h_{2})$ with $h_{1}(x,y)=x^{2}-y$,
$h_{2}(x,y)=x^{2}y$ polynomials in ${\mathcal{O}}_{K}[x,y]$. Then a simplicial
polyhedral subdivision subordinate to $\Gamma(\boldsymbol{h})$ is given by
\end{example}

\begin{center}%
\begin{tabular}
[c]{|l|l|l|}\hline
Cone & $h_{1,b(\Delta)}$ & $h_{2,b(\Delta)}$\\\hline
$\Delta_{1}:=(1,0){\mathbb{R}}_{>0}$ & $y$ & $x^{2}y$\\\hline
$\Delta_{2}:=(1,0){\mathbb{R}}_{>0}+(1,2){\mathbb{R}}_{>0}$ & $y$ & $x^{2}%
y$\\\hline
$\Delta_{3}:=(1,2){\mathbb{R}}_{>0}$ & $x^{2}-y$ & $x^{2}y$\\\hline
$\Delta_{4}:=(1,2){\mathbb{R}}_{>0}+(0,1){\mathbb{R}}_{>0}$ & $x^{2}$ &
$x^{2}y$\\\hline
$\Delta_{5}:=(0,1){\mathbb{R}}_{>0}$ & $x^{2}$ & $x^{2}y,$\\\hline
\end{tabular}

\end{center}

\noindent where ${\mathbb{R}}_{>0}:={\mathbb{R}}_{+}\smallsetminus\left\{
\boldsymbol{0}\right\}  $. Notice that for every $\boldsymbol{k}\in
\mathbb{R}_{+}^{n}\smallsetminus(\left\{  \boldsymbol{0}\right\}  \cup
\Delta_{3})$ and every non-empty subset $I\subseteq\left\{  1,2\right\}  $,
the subset defined in (\ref{Condition_1}) is empty, thus (\ref{Condition_2})
is always satisfied. In the case $\boldsymbol{k}=\boldsymbol{0}$ and
$\boldsymbol{k}\in\Delta_{3}$, $h_{1,\boldsymbol{k}}=x^{2}-y$,
$h_{2,\boldsymbol{k}}=x^{2}y$, the conditions (\ref{Condition_1}%
)-(\ref{Condition_2}) are also verified. Hence $\boldsymbol{h}$ is
non-degenerate over $\mathbb{F}_{q}$ with respect to $\Gamma\left(
\boldsymbol{h}\right)  $.

\begin{example}
Let $\boldsymbol{h}=\left(  h_{1}(\boldsymbol{x}),\ldots,h_{r}(\boldsymbol{x}%
)\right)  $ be a monomial mapping. In this case, $\Gamma\left(  \boldsymbol{h}%
\right)  =\boldsymbol{m}_{0}+{\mathbb{R}}_{+}^{n}$ for some nonzero vector
$\boldsymbol{m}_{0}$ in ${\mathbb{N}}^{n}$. Then for every vector
$\boldsymbol{k}\in\mathbb{R}_{+}^{n}$ \ $h_{i,\boldsymbol{k}}(\boldsymbol{x}%
)=h_{i}(\boldsymbol{x})$ for $i=1,\ldots,r$, and thus the subset in
(\ref{Condition_1}) is always empty, which implies that condition
(\ref{Condition_2}) is always satisfied. Therefore any monomial mapping (with
$r\leq n$) is non-degenerate \textit{over }$\mathbb{F}_{q}$ with respect to
its Newton polyhedron.
\end{example}

\begin{example}
\label{Example_A}$f(\boldsymbol{x}),$ $g(\boldsymbol{x})\in\mathcal{O}%
_{K}[x_{1},...,x_{n}]\backslash\pi\mathcal{O}_{K}[x_{1},...,x_{n}]$ such that
$g(\boldsymbol{x})=\boldsymbol{x}^{\boldsymbol{m}_{0}}$, with $\boldsymbol{m}%
_{0}\neq\boldsymbol{0}$, is a monomial. Suppose that $f$ is non-degenerate
with respect to $\Gamma\left(  f\right)  $ \textit{over }$\mathbb{F}_{q}$. In
this case, $\Gamma\left(  \left(  f,g\right)  \right)  =\boldsymbol{m}%
_{0}+\Gamma\left(  f\right)  $. Then the subset in (\ref{Condition_1}) can
take three different forms:%
\begin{gather*}
\text{(i) }\left\{  \overline{\boldsymbol{z}}\in\left(  \mathbb{F}_{q}%
^{\times}\right)  ^{n};\overline{f}_{\boldsymbol{k}}(\overline{\boldsymbol{z}%
})=\overline{g}\left(  \overline{\boldsymbol{z}}\right)  =0\right\}
=\varnothing\text{, (ii)}\left\{  \overline{\boldsymbol{z}}\in\left(
\mathbb{F}_{q}^{\times}\right)  ^{n};\overline{f}_{\boldsymbol{k}}%
(\overline{\boldsymbol{z}})=0\right\}  \text{,}\\
\text{(iii)}\left\{  \overline{\boldsymbol{z}}\in\left(  \mathbb{F}%
_{q}^{\times}\right)  ^{n};\overline{g}\left(  \overline{\boldsymbol{z}%
}\right)  =0,\overline{f}_{\boldsymbol{k}}(\overline{\boldsymbol{z}}%
)\neq0\right\}  =\varnothing.
\end{gather*}
In the second case, conditions (\ref{Condition_1})-(\ref{Condition_2}) are
verified due to the hypothesis that $f$ is non-degenerate with respect
$\Gamma\left(  f\right)  $ \textit{over }$\mathbb{F}_{q}$. Hence, $(f,g)$ is a
non-degenerate mapping \textit{over }$\mathbb{F}_{q}$ with respect to
$\Gamma\left(  \left(  f,g\right)  \right)  $ \textit{over }$\mathbb{F}_{q}$.
\end{example}

\section{\label{Sect_4}Meromorphic continuation of multivariate local zeta
functions}

Along this section, we work with a fix simplicial polyhedral subdivision
$\mathcal{F}(\boldsymbol{h})$ subordinate to $\Gamma(\boldsymbol{h})$. Let
$\Delta\in\mathcal{F}(\boldsymbol{h})\cup\left\{  \mathbf{0}\right\}  $ and
$I\subseteq\left\{  1,\ldots,r\right\}  $, we put
\[
\overline{V}_{\Delta,I}:=\left\{  \overline{\boldsymbol{z}}\in(\mathbb{F}%
_{q}^{\times})^{n};\ \overline{h}_{i,b(\Delta)}(\overline{\boldsymbol{z}%
})=0\ \Leftrightarrow\ i\in I\right\}  .
\]
We use the convention $\overline{V}_{\Delta,\left\{  1,\ldots,r\right\}
}=\overline{V}_{\Delta}$. If $\Delta=\mathbf{0}$, then
\[
\overline{V}_{\boldsymbol{0},I}=\left\{  \overline{\boldsymbol{z}}%
\in(\mathbb{F}_{q}^{\times})^{n};\ \overline{h}_{i}(\overline{\boldsymbol{z}%
})=0\ \Leftrightarrow\ i\in I\right\}  =\overline{V}_{I},
\]
where $\overline{V}_{I}$ \ is the set defined in (\ref{Notation}). In
particular, $\overline{V}_{\boldsymbol{0},\left\{  1,\ldots,r\right\}
}=\overline{V}$ and
\[
\overline{V}_{\boldsymbol{0},\varnothing}=\left\{  \overline{\boldsymbol{z}%
}\in(\mathbb{F}_{q}^{\times})^{n};\ \overline{h}_{i}(\overline{\boldsymbol{z}%
})\neq0,\ i=1,\ldots,r\right\}  =\overline{V}_{\varnothing}.
\]
If $\boldsymbol{h}=(h_{1},\ldots,h_{r})$ is non-degenerated polynomial mapping
over ${\mathbb{F}}_{q}$ with respect to $\Gamma(\boldsymbol{h})$, then Lemma
\ref{Lemma3} is true for $\boldsymbol{h}_{b(\Delta)}=(h_{1,b(\Delta)}%
,\ldots,h_{r,b(\Delta)})$.

\begin{theorem}
\label{Theorem1} Assume that $\boldsymbol{h}=(h_{1},\ldots,h_{r})$ is
non-degenerated polynomial mapping over ${\mathbb{F}}_{q}$ with respect to
$\Gamma(\boldsymbol{h})$, with $r\leq n$ as before. Fix a simplicial
polyhedral subdivision $\mathcal{F}(\boldsymbol{h})$ subordinate to
$\Gamma(\boldsymbol{h})$. Then $Z(\mathbf{s},\boldsymbol{h})$ has a
meromorphic continuation to $\mathbb{C}^{r}$ as a rational function in the
variables $q^{-s_{i}}$, $i=1,\ldots,r$. In addition, the following explicit
formula holds:
\[
Z(\boldsymbol{s},\boldsymbol{h})=L_{\left\{  \boldsymbol{0}\right\}
}(\boldsymbol{s},\boldsymbol{h})+\sum_{\substack{\text{$\Delta\in
\mathcal{F}(\boldsymbol{h})$}}}L_{\Delta}(\boldsymbol{s},\boldsymbol{h}%
)S_{\Delta},
\]
where
\begin{align*}
L_{\left\{  \mathbf{0}\right\}  }  &  =q^{-n}\sum_{I\subseteq\left\{
1,\ldots,r\right\}  }Card(\overline{V}_{I})\prod_{i\in I}\frac
{(q-1)q^{-1-s_{i}}}{1-q^{-1-s_{i}}},\\
L_{\Delta}  &  =q^{-n}\sum_{I\subseteq\left\{  1,\ldots,r\right\}
}Card(\overline{V}_{\Delta,I})\prod_{i\in I}\frac{(q-1)q^{-1-s_{i}}%
}{1-q^{-1-s_{i}}},
\end{align*}
with the convention that for $I=\varnothing$, $\prod_{i\in I}\frac
{(q-1)q^{-1-s_{i}}}{1-q^{-1-s_{i}}}:=1$, and
\[
S_{\Delta}=\sum_{\boldsymbol{k}\in{\mathbb{N}}^{n}\cap\Delta}q^{-\sigma
(\boldsymbol{k})-\sum_{i=1}^{r}d(\boldsymbol{k},\Gamma(h_{i}))s_{i}}.
\]
Let $\Delta$ be the cone strictly positively generated by linearly independent
vectors $\boldsymbol{w}_{1},\ldots,\allowbreak\boldsymbol{w}_{l}\in
{\mathbb{N}}^{n}\backslash\left\{  \mathbf{0}\right\}  $, then
\[
S_{\Delta}=\frac{\sum_{\boldsymbol{t}}q^{-\sigma(\boldsymbol{t})-\sum
_{i=1}^{r}d(\boldsymbol{t},\Gamma(h_{i}))s_{i}}}{(1-q^{-\sigma(\boldsymbol{w}%
_{1})-\sum_{i=1}^{r}d(\boldsymbol{w}_{1},\Gamma(h_{i}))s_{i}})\cdots
(1-q^{-\sigma(\boldsymbol{w}_{l})-\sum_{i=1}^{r}d(\boldsymbol{w}_{l}%
,\Gamma(h_{i}))s_{i}})},
\]
where $\boldsymbol{t}$ runs through the elements of the set
\begin{equation}
{\mathbb{Z}}^{n}\cap\left\{  \sum_{i=1}^{l}\lambda_{i}\boldsymbol{w}%
_{i};\text{ $0<\lambda_{i}\leq1$ for $i=1,\ldots,l$}\right\}  .
\label{conjunto 1}%
\end{equation}

\end{theorem}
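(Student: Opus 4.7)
The plan is to partition the integration domain $\mathcal{O}_K^n \setminus D_K$ according to the valuations of the coordinates, group the resulting pieces via the simplicial subdivision $\mathcal{F}(\boldsymbol{h})$, and reduce each piece to the integral computed by Lemma \ref{Lemma3}. Concretely, I will write
\[
\mathcal{O}_K^n \setminus D_K = \bigsqcup_{\boldsymbol{k} \in \mathbb{N}^n} \bigl( E_{\boldsymbol{k}} \setminus D_K \bigr), \qquad E_{\boldsymbol{k}} := \bigl\{ \boldsymbol{x} \in \mathcal{O}_K^n : \mathrm{ord}(x_j) = k_j,\ 1 \le j \le n \bigr\},
\]
up to a null set, and correspondingly decompose $Z(\boldsymbol{s}, \boldsymbol{h}) = \sum_{\boldsymbol{k}} Z_{\boldsymbol{k}}(\boldsymbol{s}, \boldsymbol{h})$. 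The piece $\boldsymbol{k} = \boldsymbol{0}$ is the integral over $(\mathcal{O}_K^\times)^n \setminus D_K$, which is precisely $L(\boldsymbol{s}, \boldsymbol{h})$ of Lemma \ref{Lemma3}; the non-degeneracy hypothesis applied at $\boldsymbol{k} = \boldsymbol{0}$ (where $h_{i,\boldsymbol{0}} = h_i$) supplies exactly the rank assumption needed, and this yields the term $L_{\{\boldsymbol{0}\}}$.

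For $\boldsymbol{k} \ne \boldsymbol{0}$, let $\Delta \in \mathcal{F}(\boldsymbol{h})$ be the unique simplicial cone containing $\boldsymbol{k}$. I will perform the substitution $x_j = \pi^{k_j} u_j$ with $u_j \in \mathcal{O}_K^\times$, whose Jacobian contributes $q^{-\sigma(\boldsymbol{k})}$. Expanding
\[
h_i(\pi^{\boldsymbol{k}} \boldsymbol{u}) = \sum_{\boldsymbol{m} \in \mathrm{supp}(h_i)} c_{i, \boldsymbol{m}}\, \pi^{\langle \boldsymbol{k}, \boldsymbol{m}\rangle}\, \boldsymbol{u}^{\boldsymbol{m}} = \pi^{d(\boldsymbol{k}, \Gamma(h_i))} \bigl( h_{i, \boldsymbol{k}}(\boldsymbol{u}) + \pi \, \tilde g_i(\boldsymbol{u}) \bigr)
\]
with $\tilde g_i \in \mathcal{O}_K[\boldsymbol{u}]$, and invoking the face additivity of Remark \ref{remark0} to conclude that $F(\boldsymbol{k}, \Gamma(h_i))$ is constant on $\Delta$ (so that $h_{i, \boldsymbol{k}} = h_{i, b(\Delta)}$), I obtain
\[
Z_{\boldsymbol{k}}(\boldsymbol{s}, \boldsymbol{h}) = q^{-\sigma(\boldsymbol{k}) - \sum_{i=1}^r d(\boldsymbol{k}, \Gamma(h_i)) s_i} \cdot L\bigl( \boldsymbol{s}, \, \boldsymbol{h}_{b(\Delta)} + \pi \tilde{\boldsymbol{g}} \bigr).
\]
The non-degeneracy hypothesis applied at the barycenter $b(\Delta)$ is exactly the rank condition needed in Lemma \ref{Lemma3}; combined with Remark \ref{Note2}, this evaluates the $L$-factor as $L_\Delta(\boldsymbol{s}, \boldsymbol{h})$, independent of the specific $\boldsymbol{k} \in \Delta$.

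Summing $\boldsymbol{k}$ over $\Delta \cap \mathbb{N}^n$ factors $L_\Delta$ out of a series equal to $S_\Delta$, and summing then over $\Delta \in \mathcal{F}(\boldsymbol{h})$ produces the main decomposition. To cast $S_\Delta$ as a rational function, I will use the standard half-open tiling
\[
\Delta \cap \mathbb{Z}^n = \bigsqcup_{\boldsymbol{t}} \Bigl( \boldsymbol{t} + \mathbb{N} \boldsymbol{w}_1 + \cdots + \mathbb{N} \boldsymbol{w}_l \Bigr),
\]
where $\boldsymbol{t}$ runs over the finite lattice set (\ref{conjunto 1}). Since $\Delta \subset \mathbb{R}_+^n$ forces $\Delta \cap \mathbb{N}^n = \Delta \cap \mathbb{Z}^n$, and since both $\sigma$ and $d(\cdot, \Gamma(h_i))$ are $\mathbb{R}$-linear on $\Delta$ (the latter equals $\langle \boldsymbol{k}, \boldsymbol{m}\rangle$ for any vertex $\boldsymbol{m}$ of $F(b(\Delta), \Gamma(h_i))$), the sum splits as the finite sum over $\boldsymbol{t}$ multiplied by $l$ geometric series in the variables $q^{-s_i}$, each convergent for $\mathrm{Re}(s_i) > 0$ because $\sigma(\boldsymbol{w}_j) \ge 1$. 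The resulting closed form is manifestly rational in $q^{-s_1}, \ldots, q^{-s_r}$, which delivers the meromorphic continuation.

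The main technical obstacle I anticipate is the verification that the face function $h_{i, \boldsymbol{k}}$ is constant on each cone of $\mathcal{F}(\boldsymbol{h})$ and equals $h_{i, b(\Delta)}$ there. This is the precise point at which working with a subdivision subordinate to the full $\Gamma(\boldsymbol{h})$, rather than to the individual $\Gamma(h_i)$, is essential: by Remark \ref{remark0}(3) every $\Delta \in \mathcal{F}(\boldsymbol{h})$ lies inside some $\overline{\Delta}_{\tau_j}$ for each $j$, and the uniqueness of the Minkowski decomposition of faces forces $F(\cdot, \Gamma(h_j))$ to be constant on $\Delta$. Once this compatibility is secured, the remaining steps (partition, change of variables, Lemma \ref{Lemma3}, and the cone lattice-point identity) are essentially mechanical.
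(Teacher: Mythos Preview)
Your proposal is correct and follows essentially the same route as the paper: partition $\mathcal{O}_K^n$ by valuation vectors $E_{\boldsymbol{k}}$, group the $\boldsymbol{k}$'s by the cones of $\mathcal{F}(\boldsymbol{h})$, change variables $x_j=\pi^{k_j}u_j$ to factor out $\pi^{d(\boldsymbol{k},\Gamma(h_i))}$, invoke Lemma~\ref{Lemma3} (with Remark~\ref{Note2}) under the non-degeneracy hypothesis, and then sum the resulting geometric series over each cone via the half-open fundamental-domain tiling. The only cosmetic difference is that the paper first treats the simple-cone case before the general simplicial one, and justifies the constancy of $h_{i,\boldsymbol{k}}$ on $\Delta$ by the alternative description of the equivalence relation $\sim$ (the unnumbered remark after Remark~\ref{remark0}) rather than via Remark~\ref{remark0}(3); either route works.
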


\begin{proof}
By using the simplicial polyhedral subdivision $\mathcal{F}(\boldsymbol{h})$,
we have
\[
{\mathbb{R}}_{+}^{n}=\left\{  \boldsymbol{0}\right\}
{\textstyle\bigsqcup}
{\textstyle\bigsqcup\nolimits_{\Delta\in{\mathcal{F}}(\boldsymbol{h})}}
\Delta.
\]
We set for $\boldsymbol{k}=(k_{1},\ldots,k_{n})\in\mathbb{N}^{n}$,
\[
E_{\boldsymbol{k}}:=\left\{  (x_{1},\ldots,x_{n})\in{\mathcal{O}}_{K}%
^{n};ord(x_{i})=k_{i},i=1,\ldots,n\right\}  .
\]
Hence%
\begin{equation}
Z(\boldsymbol{s},\boldsymbol{h})=\int\limits_{({\mathcal{O}}_{K}^{\times}%
)^{n}\backslash D_{K}}\prod_{i=1}^{r}|h_{i}(x)|_{K}^{s_{i}}|d\boldsymbol{x}%
|_{K}+\sum_{\substack{\text{$\Delta\in\mathcal{F}(\boldsymbol{h})$}}%
}\hspace{0.1cm}\sum_{\boldsymbol{k}\in{\mathbb{N}}^{n}\cap\Delta}%
\hspace{0.3cm}\int\limits_{\substack{E_{\boldsymbol{k}}\backslash D_{K}}%
}\prod_{i=1}^{r}|h_{i}(x)|_{K}^{s_{i}}|d\boldsymbol{x}|_{K}.\nonumber
\end{equation}
For $\Delta\in\mathcal{F}(\boldsymbol{h})$, $\boldsymbol{k}\in{\mathbb{N}}%
^{n}\cap\Delta$, and $\boldsymbol{x}=(x_{1},\ldots,x_{n})\in E_{\boldsymbol{k}%
}$, we put $x_{j}=\pi^{k_{j}}u_{j}$ with $u_{j}\in{\mathcal{O}}_{K}^{\times}$.
Then
\[
|d\boldsymbol{x}|_{K}=q^{-\sigma(\boldsymbol{k})}|d\boldsymbol{u}|_{K}\text{
and }\boldsymbol{x}^{\boldsymbol{m}}=x_{1}^{m_{1}}\cdots x_{n}^{m_{n}}%
=\pi^{\left\langle \boldsymbol{k},\boldsymbol{m}\right\rangle }u_{1}^{m_{1}%
}\cdots u_{n}^{m_{n}}.
\]
Fix $i\in\left\{  1,\ldots,r\right\}  $ and $\boldsymbol{k}$ . For
$\boldsymbol{m}\in supp(h_{i})$, the scalar product $\left\langle
\boldsymbol{k},\boldsymbol{m}\right\rangle $ attains its minimum
$d(\boldsymbol{k},\Gamma(h_{i}))$ exactly when $\boldsymbol{m}\in
F(\boldsymbol{k},\Gamma(h_{i}))$, and thus $\left\langle \boldsymbol{k}%
,\boldsymbol{m}\right\rangle \geq d(\boldsymbol{k},\Gamma(h_{i}))+1$ for
$\boldsymbol{m}\in supp(h_{i})\backslash supp(h_{i})\cap F(\boldsymbol{k}%
,\Gamma(h_{i}))$. This fact implies that
\begin{align}
h_{i}(\boldsymbol{x}) &  =\pi^{d(\boldsymbol{k},\Gamma(h_{i}))}%
(h_{i,\boldsymbol{k}}(\boldsymbol{u})+\pi\widetilde{h}_{i,\boldsymbol{k}%
}(\boldsymbol{u}))\nonumber\\
&  =\pi^{d(\boldsymbol{k},\Gamma(h_{i}))}(h_{i,b(\Delta)}(\boldsymbol{u}%
)+\pi\widetilde{h}_{i,\boldsymbol{k}}(\boldsymbol{u})),\nonumber
\end{align}
where $\widetilde{h}_{i,\boldsymbol{k}}(\boldsymbol{u})$ is a polynomial over
${\mathcal{O}}_{K}$ in the variables $u_{1},\ldots,u_{n}$. Note that
$h_{i,\boldsymbol{k}}(\boldsymbol{u})$ does not depend on $\boldsymbol{k}%
\in\Delta$, for this reason we take $h_{i,\boldsymbol{k}}(\boldsymbol{u}%
)=h_{i,b(\Delta)}(\boldsymbol{u})$. Therefore
\[
Z(\boldsymbol{s},\boldsymbol{h})=L_{\left\{  \boldsymbol{0}\right\}
}(\boldsymbol{s},\boldsymbol{h})+\sum_{\substack{\text{$\Delta\in
\mathcal{F}(\boldsymbol{h})$}}}\hspace{0.1cm}L_{\Delta}(\boldsymbol{s}%
,\boldsymbol{h})\sum_{\boldsymbol{k}\in{\mathbb{N}}^{n}\cap\Delta}%
\hspace{0.3cm}q^{-\sigma(\boldsymbol{k})-\sum_{i=1}^{r}d(\boldsymbol{k}%
,\Gamma(h_{i}))s_{i}}%
\]
where%
\[
L_{\left\{  \boldsymbol{0}\right\}  }(\boldsymbol{s},\boldsymbol{h}%
):=\int\limits_{({\mathcal{O}}_{K}^{\times})^{n}\backslash D_{K}}\prod
_{i=1}^{r}|h_{i}(x)|_{K}^{s_{i}}|d\boldsymbol{x}|_{K},
\]%
\[
L_{\Delta}(\boldsymbol{s},\boldsymbol{h}):=\int\limits_{({\mathcal{O}}%
_{K}^{\times})^{n}\backslash D_{\Delta}}\prod_{i=1}^{r}|h_{i,b(\Delta
)}(\boldsymbol{u})+\pi\widetilde{h}_{i,\boldsymbol{k}}(\boldsymbol{u}%
)|_{K}^{s_{i}}|d\boldsymbol{u}|_{K}%
\]
with $D_{\Delta}=\bigcup_{i=1}^{r}\left\{  \boldsymbol{x}\in({\mathcal{O}}%
_{K}^{\times})^{n};h_{i,b(\Delta)}(\boldsymbol{u})+\pi\widetilde
{h}_{i,\boldsymbol{k}}(\boldsymbol{u})=0\right\}  $. By using the
non-dege\-neracy condition, integrals $L_{\left\{  \boldsymbol{0}\right\}
}(\boldsymbol{s},\boldsymbol{h})$, $L_{\Delta}(\boldsymbol{s},\boldsymbol{h})$
can be computed using Lemma \ref{Lemma3} and Remarks \ref{Note1}, \ref{Note2}.

Let $\Delta$ be the cone strictly positively generated by linearly independent
vectors $\boldsymbol{w}_{1},\ldots,\boldsymbol{w}_{l}\in{\mathbb{N}}%
^{n}\backslash\left\{  \mathbf{0}\right\}  $. If $\Delta$ is a simple cone
then ${\mathbb{N}}^{n}\cap\Delta=\left(  {\mathbb{N}}\backslash\left\{
0\right\}  \right)  \boldsymbol{w}_{1}+\cdots+\left(  {\mathbb{N}}%
\backslash\left\{  0\right\}  \right)  \boldsymbol{w}_{l}$. By using that the
functions $d(\cdot,\Gamma(h_{i})$ are linear over each cone $\Delta$, \ and
that
\[
\sigma(\boldsymbol{w}_{m})+\sum_{i=1}^{r}d(\boldsymbol{w}_{m},\Gamma
(h_{i}))\operatorname{Re}(s_{i})>0,m=1,\ldots,l,
\]
since $\operatorname{Re}(s_{1}),\ldots,\operatorname{Re}(s_{r})>0$, we obtain
\begin{align*}
S_{\Delta}  &  =\sum_{\lambda_{1},\ldots,\lambda_{l}\in{\mathbb{N}}%
\backslash\left\{  0\right\}  }q^{-\sigma(\lambda_{1}\boldsymbol{w}_{1}%
+\ldots+\lambda_{l}\boldsymbol{w}_{l})-\sum_{i=1}^{r}d(\lambda_{1}%
\boldsymbol{w}_{1}+\ldots+\lambda_{l}\boldsymbol{w}_{l},\Gamma(h_{i}))s_{i}}\\
&  =\sum_{\lambda_{1}=1}^{\infty}(q^{-\sigma(\boldsymbol{w}_{1})-\sum
_{i=1}^{r}d(\boldsymbol{w}_{1},\Gamma(h_{i}))s_{i}})^{\lambda_{1}}\cdots
\sum_{\lambda_{l}=1}^{\infty}(q^{-\sigma(\boldsymbol{w}_{l})-\sum_{i=1}%
^{r}d(\boldsymbol{w}_{l},\Gamma(h_{i}))s_{i}})^{\lambda_{l}}\\
S_{\Delta}  &  =\frac{q^{-\sigma(\boldsymbol{w}_{1})-\sum_{i=1}^{r}%
d(\boldsymbol{w}_{1},\Gamma(h_{i}))s_{i}}}{1-q^{-\sigma(\boldsymbol{w}%
_{1})-\sum_{i=1}^{r}d(\boldsymbol{w}_{1},\Gamma(h_{i}))s_{i}}}\cdots
\frac{q^{-\sigma(\boldsymbol{w}_{l})-\sum_{i=1}^{r}d(\boldsymbol{w}_{l}%
,\Gamma(h_{i}))s_{i}}}{1-q^{-\sigma(\boldsymbol{w}_{l})-\sum_{i=1}%
^{r}d(\boldsymbol{w}_{l},\Gamma(h_{i}))s_{i}}}\\
&  =\frac{\sum_{\boldsymbol{t}}q^{-\sigma(\boldsymbol{t})-\sum_{i=1}%
^{r}d(\boldsymbol{t},\Gamma(h_{i}))s_{i}}}{(1-q^{-\sigma(\boldsymbol{w}%
_{1})-\sum_{i=1}^{r}d(\boldsymbol{w}_{1},\Gamma(h_{i}))s_{i}})\cdots
(1-q^{-\sigma(\boldsymbol{w}_{l})-\sum_{i=1}^{r}d(\boldsymbol{w}_{l}%
,\Gamma(h_{i}))s_{i}})},
\end{align*}
where $\boldsymbol{t}$ runs through the elements of the set (\ref{conjunto 1}%
), which consists exactly of one element: $\boldsymbol{t}=\sum_{i=1}%
^{l}\boldsymbol{w}_{i}$. We now consider the case in which $\Delta$ is a
simplicial cone. Note that ${\mathbb{N}}^{n}\cap\Delta$ is the disjoint union
of the sets
\[
\boldsymbol{t}+{\mathbb{N}}\boldsymbol{w}_{1}+\cdots+{\mathbb{N}%
}\boldsymbol{w}_{l},
\]
where $\boldsymbol{t}$ runs through the elements of the set
\[
{\mathbb{Z}}^{n}\cap\left\{  \sum_{i=1}^{l}\lambda_{i}\boldsymbol{w}%
_{i};\text{ $0<\lambda_{i}\leq1$ for $i=1,\ldots,l$}\right\}  .
\]
Hence $S_{\Delta}$ equals
\[
\sum_{\boldsymbol{t}}q^{-\sigma(\boldsymbol{t})-\sum_{i=1}^{r}d(\boldsymbol{t}%
,\Gamma(h_{i}))s_{i}}\sum_{\lambda_{1},\ldots,\lambda_{l}\in{\mathbb{N}}%
}q^{-\sigma(\sum_{j=1}^{l}\lambda_{j}\boldsymbol{w}_{j})-\sum_{i=1}%
^{r}d(\lambda_{1}\boldsymbol{w}_{1}+\ldots+\lambda_{l}\boldsymbol{w}%
_{l},\Gamma(h_{i}))s_{i}},
\]
and since $\operatorname{Re}(s_{1}),\ldots,\operatorname{Re}(s_{r})>0$,
\[
S_{\Delta}=\frac{\sum_{\boldsymbol{t}}q^{-\sigma(\boldsymbol{t})-\sum
_{i=1}^{r}d(\boldsymbol{t},\Gamma(h_{i}))s_{i}}}{(1-{q^{-\sigma(\boldsymbol{w}%
_{1})-\sum_{i=1}^{r}d(\boldsymbol{w}_{1},\Gamma(h_{i}))s_{i}}})\cdots
({1-q^{-\sigma(\boldsymbol{w}_{l})-\sum_{i=1}^{r}d(\boldsymbol{w}_{l}%
,\Gamma(h_{i}))s_{i}}})}.
\]

\end{proof}

\begin{remark}
In the $p$-adic case, $K=%
\mathbb{Q}
_{p},$ Theorem \ref{Theorem1} is a generalization of Theorem 4.2 in
\cite{DenefHoor} and Theorem 4.3 in \cite{Bories}.
\end{remark}

\section{\label{Sect_5}Local zeta function for rational functions}

From now on, we fix two non-constant polynomials $f(\boldsymbol{x})$,
$g(\boldsymbol{x})$ in $n$ variables, $n\geq2$, with coe\-fficients in
$\ \mathcal{O}_{K}[x_{1},\ldots,x_{n}]\backslash\pi{\mathcal{O}}_{K}%
[x_{1},\ldots,x_{n}]$ and \ set $D_{K}:=\left\{  \boldsymbol{x}\in
K^{n};f(\boldsymbol{x})=0\right\}  \cup\left\{  \boldsymbol{x}\in
K^{n};g(\boldsymbol{x})=0\right\}  $, and%
\[
\frac{f}{g}:K^{n}\smallsetminus D_{K}\rightarrow K.
\]
Furthermore, we define the \textit{Newton polyhedron} $\Gamma\left(  \frac
{f}{g}\right)  $ of $\frac{f}{g}$ to be $\Gamma(fg)$, and assume that the
mapping $(f,g):K^{n}\rightarrow K^{2}$ is non-degenerate over $\mathbb{F}_{q}$
with respect to $\Gamma\left(  \frac{f}{g}\right)  $ as before. In this case
we will say that $\frac{f}{g}$ \textit{is non-degenerate over }$\mathbb{F}%
_{q}$\textit{ with respect to} $\Gamma\left(  \frac{f}{g}\right)  $. We fix
\ a simplicial polyhedral subdivision $\mathcal{F}\left(  \frac{f}{g}\right)
$ of $\mathbb{R}_{+}^{n}$ subordinate to $\Gamma\left(  \frac{f}{g}\right)  $.
For $\Delta\in\mathcal{F}\left(  \frac{f}{g}\right)  \cup\left\{
\boldsymbol{0}\right\}  $, we put
\begin{align}
N_{\Delta,\left\{  f\right\}  }  &  :=Card\left\{  \overline{\boldsymbol{a}%
}\in(\mathbb{F}_{q}^{\times})^{n};\overline{\text{$f$}}\text{$_{b(\Delta
)}(\overline{\boldsymbol{a}})=0$ and $\overline{g}_{b(\Delta)}(\overline
{\boldsymbol{a}})\neq0$}\right\}  ,\nonumber\\
N_{\Delta,\left\{  g\right\}  }  &  :=Card\left\{  \overline{\boldsymbol{a}%
}\in(\mathbb{F}_{q}^{\times})^{n};\text{$\overline{f}_{b(\Delta)}%
(\overline{\boldsymbol{a}})\neq0$ and $\overline{g}_{b(\Delta)}(\overline
{\boldsymbol{a}})=0$}\right\}  ,\nonumber\\
N_{\Delta,\left\{  f,g\right\}  }  &  :=Card\left\{  \overline{\boldsymbol{a}%
}\in(\mathbb{F}_{q}^{\times})^{n};\text{$\overline{f}_{b(\Delta)}%
(\overline{\boldsymbol{a}})=0$ and $\overline{g}_{b(\Delta)}(\overline
{\boldsymbol{a}})=0$}\right\}  ,\nonumber
\end{align}
with the convention that if $b(\Delta)=b\left(  \boldsymbol{0}\right)
=\boldsymbol{0}$, then $f$$_{b(\Delta)}=f$ and $g_{b(\Delta)}=g$. We also
define $\mathfrak{D}\left(  \frac{f}{g}\right)  =\mathfrak{D}(f,g)$, which is
the set of primitive vectors in ${\mathbb{N}}^{n}\backslash\left\{
\mathbf{0}\right\}  $ perpendicular to the facets of $\Gamma\left(  \frac
{f}{g}\right)  $. We set
\begin{align}
T_{+}  &  :=\left\{  \boldsymbol{w}\in\mathfrak{D}\left(  \frac{f}{g}\right)
;\text{ $d(\boldsymbol{w},\Gamma(g))-d(\boldsymbol{w},\Gamma(f))>0$}\right\}
,\nonumber\\
T_{-}  &  :=\left\{  \boldsymbol{w}\in\mathfrak{D}\left(  \frac{f}{g}\right)
;\text{ $d(\boldsymbol{w},\Gamma(g))-d(\boldsymbol{w},\Gamma(f))<0$}\right\}
,\nonumber
\end{align}%
\[
\alpha:={\alpha}\left(  \frac{f}{g}\right)  =\left\{
\begin{array}
[c]{lll}%
\min_{\boldsymbol{w}\in T_{+}}\left\{  \frac{\sigma(\boldsymbol{w}%
)}{d(\boldsymbol{w},\Gamma(g))-d(\boldsymbol{w},\Gamma(f))}\right\}  &
\text{if} & \text{$T_{+}\neq\varnothing$}\\
&  & \\
+\infty & \text{if} & \text{$T_{+}=\varnothing,$}%
\end{array}
\right.
\]%
\[
\beta:={\beta}\left(  \frac{f}{g}\right)  =\left\{
\begin{array}
[c]{lll}%
\max_{\boldsymbol{w}\in T_{-}}\left\{  \frac{\sigma(\boldsymbol{w}%
)}{d(\boldsymbol{w},\Gamma(g))-d(\boldsymbol{w},\Gamma(f))}\right\}  &
\text{if} & \text{$T_{-}\neq\varnothing$}\\
&  & \\
-\infty & \text{if} & \text{$T_{-}=\varnothing,$}%
\end{array}
\right.
\]
and%
\[
\widetilde{\alpha}:=\widetilde{\alpha}\left(  \frac{f}{g}\right)
=\min\left\{  1,{\alpha}\right\}  \text{, \ }\widetilde{\beta}:=\widetilde
{\beta}\left(  \frac{f}{g}\right)  =\max\left\{  -1,{\beta}\right\}  .
\]
Notice that ${\alpha}>0$ and ${\beta}<0$.

We define the local zeta function attached to $\frac{f}{g}$ as
\[
Z\left(  s,\frac{f}{g}\right)  =Z(s,-s,f,g),\ s\in\mathbb{C},
\]
where $Z(s_{1},s_{2},f,g)$ denotes the meromorphic continuation of the local
zeta function attached to the polynomial mapping $(f,g)$, see Theorem
\ref{Theorem1}.

\begin{theorem}
\label{Theorem2} Assume that $\frac{f}{g}$ \textit{is non-degenerate over
}$\mathbb{F}_{q}$\textit{ with respect to} $\Gamma\left(  \frac{f}{g}\right)
$, with $n\geq2$ as before. We fix \ a simplicial polyhedral subdivision
$\mathcal{F}\left(  \frac{f}{g}\right)  $ of $\mathbb{R}_{+}^{n}$ subordinate
to $\Gamma\left(  \frac{f}{g}\right)  $. \ Then the following assertions hold:

\noindent(i) $Z\left(  s,\frac{f}{g}\right)  $ has a meromorphic continuation
to the whole complex plane as a rational function of $q^{-s}$ and the
following explicit formula holds:
\[
Z\left(  s,\frac{f}{g}\right)  =\sum_{\text{$\Delta\in\mathcal{F}(\frac{f}%
{g})\cup$}\left\{  \mathbf{0}\right\}  }L_{\Delta}\left(  s,\frac{f}%
{g}\right)  S_{\Delta}(s),
\]
where for $\Delta\in\mathcal{F}\left(  \frac{f}{g}\right)  \cup$$\left\{
\mathbf{0}\right\}  $,%
\begin{align*}
L_{\Delta}(s,\frac{f}{g})  &  =q^{-n}\left[  (q-1)^{n}-N_{\Delta,\left\{
f\right\}  }\frac{1-q^{-s}}{1-q^{-1-s}}-N_{\Delta,\left\{  g\right\}  }%
\frac{1-q^{s}}{1-q^{-1+s}}\right. \\
&  \left.  -N_{\Delta,\left\{  f,g\right\}  }\frac{(1-q^{-s})(1-q^{s}%
)}{q(1-q^{-1-s})(1-q^{-1+s})}\right]
\end{align*}
and
\[
S_{\Delta}(s)=\frac{\sum_{\boldsymbol{t}}q^{-\sigma(\boldsymbol{t}%
)-(d(\boldsymbol{t},\Gamma(f))-d(\boldsymbol{t},\Gamma(g))){s}}}{\prod
_{i=1}^{l}(1-q^{-\sigma(\boldsymbol{w}_{i})-(d(\boldsymbol{w}_{i}%
,\Gamma(f))-d(\boldsymbol{w}_{i},\Gamma(g))){s}})},
\]
\ for $\Delta\in\mathcal{F}\left(  \frac{f}{g}\right)  $ a cone strictly
positively generated by linearly independent vectors $\boldsymbol{w}%
_{1},\ldots,\boldsymbol{w}_{l}\in\mathfrak{D}\left(  \frac{f}{g}\right)  $,
and where $\boldsymbol{t}$ runs through the elements of the set
\[
\mathbb{Z}^{n}\cap\left\{  \sum_{i=1}^{l}\lambda_{i}\boldsymbol{w}_{i};\text{
$0<\lambda_{i}\leq1$ for $i=1,\ldots,$}l\right\}  .
\]
By convention $S_{\boldsymbol{0}}(s):=1$.

\noindent(ii) $Z\left(  s,\frac{f}{g}\right)  $ is a holomorphic function on
$\widetilde{\beta}<\operatorname{Re}(s)<\widetilde{\alpha}$, and on this band
it verifies that
\begin{equation}
Z\left(  s,\frac{f}{g}\right)  =\int\limits_{{\mathcal{O}}_{K}^{n}\backslash
D_{K}}\left\vert \frac{f(x)}{g(x)}\right\vert ^{s}|dx|; \label{integral}%
\end{equation}

\noindent(iii) the poles of the meromorphic continuation of $Z\left(
s,\frac{f}{g}\right)  $ belong to the set
\begin{multline*}
\bigcup\limits_{k\in{\mathbb{Z}}}\left\{  1+\frac{2\pi\sqrt{-1}k}{\ln{q}%
}\right\}  \cup\bigcup\limits_{k\in{\mathbb{Z}}}\left\{  -1+\frac{2\pi
\sqrt{-1}k}{\ln{q}}\right\}  \cup\\
\bigcup\limits_{\boldsymbol{w}\text{$\in\mathfrak{D}$}\left(  \frac{f}%
{g}\right)  }\bigcup\limits_{k\in{\mathbb{Z}}}\left\{  \frac{\sigma
(\boldsymbol{w})}{d(\boldsymbol{w},\Gamma(g))-d(\boldsymbol{w},\Gamma
(f))}+\frac{2\pi\sqrt{-1}k}{\left\{  d(\boldsymbol{w},\Gamma
(g))-d(\boldsymbol{w},\Gamma(f))\right\}  \ln{q}}\right\}  .
\end{multline*}

\end{theorem}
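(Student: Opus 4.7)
The plan is to derive Theorem \ref{Theorem2} from Theorem \ref{Theorem1} by the specialization $(s_1,s_2)=(s,-s)$, supplemented by a direct convergence analysis for the integral representation on the band $\widetilde{\beta}<\operatorname{Re}(s)<\widetilde{\alpha}$. Part (i) is an algebraic rewriting, part (ii) is a convergence/Fubini argument, and part (iii) is read off from the explicit formula.

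For part (i), I would apply Theorem \ref{Theorem1} to $\boldsymbol{h}=(f,g)$ at $(s_1,s_2)=(s,-s)$. The exponent of $q$ in $S_\Delta$ becomes $-\sigma(\boldsymbol{t})-(d(\boldsymbol{t},\Gamma(f))-d(\boldsymbol{t},\Gamma(g)))s$, giving exactly the stated expression. For $L_\Delta$, set
\[
A_1=\frac{(q-1)q^{-1-s}}{1-q^{-1-s}},\qquad A_2=\frac{(q-1)q^{-1+s}}{1-q^{-1+s}}.
\]
Using the partition identity $\mathrm{Card}(\overline{V}_{\Delta,\varnothing})+N_{\Delta,\{f\}}+N_{\Delta,\{g\}}+N_{\Delta,\{f,g\}}=(q-1)^n$, I can regroup the formula of Lemma \ref{Lemma3} as
\[
L_\Delta=q^{-n}\bigl[(q-1)^n+N_{\Delta,\{f\}}(A_1-1)+N_{\Delta,\{g\}}(A_2-1)+N_{\Delta,\{f,g\}}(A_1A_2-1)\bigr].
\]
A short simplification yields $A_1-1=-\tfrac{1-q^{-s}}{1-q^{-1-s}}$, $A_2-1=-\tfrac{1-q^{s}}{1-q^{-1+s}}$, and $A_1A_2-1=-\tfrac{(1-q^{-s})(1-q^{s})}{q(1-q^{-1-s})(1-q^{-1+s})}$, which reproduces the claimed form of $L_\Delta(s,f/g)$.

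For part (ii), I would decompose $\mathcal{O}_K^{n}\setminus D_K$, modulo a measure-zero set, as $\bigsqcup_{\Delta\in\mathcal{F}(f/g)\cup\{\boldsymbol{0}\}}\bigsqcup_{\boldsymbol{k}\in\mathbb{N}^n\cap\Delta} E_{\boldsymbol{k}}\setminus D_K$, substitute $x_j=\pi^{k_j}u_j$, and use the non-degeneracy hypothesis with Lemma \ref{Lemma3} (plus Remarks \ref{Note1}--\ref{Note2}) to compute the inner $\boldsymbol{u}$-integral. That inner integral is finite precisely when the factors $(1-q^{-1-s})^{-1}$ and $(1-q^{-1+s})^{-1}$ are, i.e.\ for $-1<\operatorname{Re}(s)<1$. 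The remaining geometric sum $\sum_{\boldsymbol{k}\in\mathbb{N}^n\cap\Delta}q^{-\sigma(\boldsymbol{k})-(d(\boldsymbol{k},\Gamma(f))-d(\boldsymbol{k},\Gamma(g)))s}$ over a simplicial cone converges iff $\sigma(\boldsymbol{w})+(d(\boldsymbol{w},\Gamma(f))-d(\boldsymbol{w},\Gamma(g)))\operatorname{Re}(s)>0$ for each generator $\boldsymbol{w}\in\mathfrak{D}(f/g)$ of $\Delta$, which is exactly $\beta<\operatorname{Re}(s)<\alpha$. Intersecting both conditions gives $\widetilde{\beta}<\operatorname{Re}(s)<\widetilde{\alpha}$; on this band, Tonelli applied to the positive integrand $|f/g|_K^{\operatorname{Re}(s)}$ legitimizes swapping sum and integral and identifies \eqref{integral} with the closed form from (i), proving both holomorphy and the integral representation.

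Finally, part (iii) is immediate from inspection of the explicit formula of (i): the only denominators that appear are $1-q^{-1-s}$ and $1-q^{-1+s}$, coming from each $L_\Delta$, and $1-q^{-\sigma(\boldsymbol{w})-(d(\boldsymbol{w},\Gamma(f))-d(\boldsymbol{w},\Gamma(g)))s}$ for $\boldsymbol{w}\in\mathfrak{D}(f/g)$, coming from the $S_\Delta$; setting each to zero produces precisely the three families of candidate poles listed. The main obstacle I anticipate is the band argument in part (ii): one must produce a uniform majorant on compact subsets of the open strip in order to swap summation and integration rigorously, and must ensure that the degenerate cases $T_{\pm}=\varnothing$ (giving $\alpha=+\infty$ or $\beta=-\infty$) are handled so that $\widetilde{\alpha}=\min\{1,\alpha\}$ and $\widetilde{\beta}=\max\{-1,\beta\}$ are indeed the correct endpoints.
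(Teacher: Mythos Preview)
Your proposal is correct and follows the paper's approach: specialize Theorem~\ref{Theorem1} at $(s_1,s_2)=(s,-s)$, regroup $L_\Delta$ algebraically (the paper does this via the individual pieces \eqref{J_1}--\eqref{J_4}, you via the partition identity and the substitutions $A_1,A_2$; both yield the same closed form), and read (iii) off the explicit denominators. Your treatment of (ii) via a direct Tonelli convergence argument on the band is in fact slightly more self-contained than the paper's, which first deduces holomorphy from the closed form and then appeals to the integral representation of $Z(s_1,s_2,f,g)$ on its natural domain---a domain $\operatorname{Re}(s_1),\operatorname{Re}(s_2)>0$ that the diagonal $(s,-s)$ never meets, so your direct route avoids that awkwardness.
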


\begin{proof}
(i) The explicit formula for $Z(s,\frac{f}{g})$ follows from Theorem
\ref{Theorem1} as follows: we take $r=2$, $s_{1}=s,$ $s_{2}=-s,$
$h_{1}=f_{b\left(  \Delta\right)  }$ and $h_{2}=g_{b\left(  \Delta\right)  }$
for $\Delta\in\mathcal{F}\left(  \frac{f}{g}\right)  \cup$$\left\{
\mathbf{0}\right\}  $, with the convention that if $b\left(  \Delta\right)
=b\left(  \mathbf{0}\right)  =\mathbf{0}$, then $h_{1}=f$ and $h_{2}=g$. Now
\[
\overline{V}_{\Delta}=\left\{  \overline{\boldsymbol{z}}\in\left(
\mathbb{F}_{q}^{\times}\right)  ^{n};\overline{f}_{b\left(  \Delta\right)
}\left(  \overline{\boldsymbol{z}}\right)  =\overline{g}_{b\left(
\Delta\right)  }\left(  \overline{\boldsymbol{z}}\right)  =0\right\}  \text{
for }\Delta\in\mathcal{F}\left(  \frac{f}{g}\right)  \cup\left\{
\mathbf{0}\right\}  ,
\]
and thus $Card(\overline{V}_{\Delta})=N_{\Delta,\left\{  f,g\right\}  }$. Now,
with $I=\left\{  1,2\right\}  $, by using (\ref{J_V}), we have%
\begin{equation}
J\left(  s,-s,\overline{V}_{\Delta}\right)  =\frac{q^{-n}\left(
1-q^{-1}\right)  ^{2}N_{\Delta,\left\{  f,g\right\}  }}{\left(  1-q^{-1-s}%
\right)  \left(  1-q^{-1+s}\right)  }. \label{J_1}%
\end{equation}
We now consider the case $I\neq\varnothing$, $I\subsetneqq\left\{
1,2\right\}  $, thus there are two cases: $I=\left\{  1\right\}  $ or
$I=\left\{  2\right\}  $. Note that%
\[
\overline{V}_{\Delta,\left\{  1\right\}  }=\left\{  \overline{\boldsymbol{z}%
}\in\left(  \mathbb{F}_{q}^{\times}\right)  ^{n};\overline{f}_{b\left(
\Delta\right)  }\left(  \overline{\boldsymbol{z}}\right)  =0\text{ and
}\overline{g}_{b\left(  \Delta\right)  }\left(  \overline{\boldsymbol{z}%
}\right)  \neq0\right\}  \text{ for }\Delta\in\mathcal{F}\left(  \frac{f}%
{g}\right)  \cup\left\{  \mathbf{0}\right\}  ,
\]
and that $Card\left(  \overline{V}_{\Delta,\left\{  1\right\}  }\right)
=N_{\Delta,\left\{  f\right\}  }$, with the convention that
\[
\overline{V}_{\boldsymbol{0},\left\{  1\right\}  }=\left\{  \overline
{\boldsymbol{z}}\in\left(  \mathbb{F}_{q}^{\times}\right)  ^{n};\overline
{f}\left(  \overline{\boldsymbol{z}}\right)  =0\text{ and }\overline{g}\left(
\overline{\boldsymbol{z}}\right)  \neq0\right\}  \text{.}%
\]
In this case, by using (\ref{J_V_I}),%
\begin{equation}
J\left(  s,-s,\overline{V}_{\Delta,\left\{  1\right\}  }\right)
=\frac{q^{-n-s}\left(  1-q^{-1}\right)  N_{\Delta,\left\{  f\right\}  }%
}{1-q^{-1-s}}. \label{J_2}%
\end{equation}
Analogously,
\begin{equation}
J\left(  s,-s,\overline{V}_{\Delta,\left\{  2\right\}  }\right)
=\frac{q^{-n+s}\left(  1-q^{-1}\right)  N_{\Delta,\left\{  g\right\}  }%
}{1-q^{-1+s}}. \label{J_3}%
\end{equation}
We now consider the case $I=\varnothing$, then
\[
\overline{V}_{\Delta,\varnothing}=\left\{  \overline{\boldsymbol{z}}\in\left(
\mathbb{F}_{q}^{\times}\right)  ^{n};\overline{f}_{b\left(  \Delta\right)
}\left(  \overline{\boldsymbol{z}}\right)  \neq0\text{ and }\overline
{g}_{b\left(  \Delta\right)  }\left(  \overline{\boldsymbol{z}}\right)
\neq0\right\}  \text{ for }\Delta\in\mathcal{F}\left(  \frac{f}{g}\right)
\cup\left\{  \mathbf{0}\right\}  ,
\]
with the convention that
\[
\overline{V}_{\boldsymbol{0},\varnothing}=\left\{  \overline{\boldsymbol{z}%
}\in\left(  \mathbb{F}_{q}^{\times}\right)  ^{n};\overline{f}\left(
\overline{\boldsymbol{z}}\right)  \neq0\text{ and }\overline{g}\left(
\overline{\boldsymbol{z}}\right)  \neq0\right\}  .
\]
Notice that $Card(\overline{V}_{\Delta,\varnothing})=\left(  q-1\right)
^{n}-N_{\Delta,\left\{  f\right\}  }-N_{\Delta,\left\{  g\right\}  }%
-N_{\Delta,\left\{  f,g\right\}  }$. Then, by using (\ref{J_V_O}),%
\begin{equation}
J\left(  s,-s,\overline{V}_{\Delta,\varnothing}\right)  =q^{-n}Card(\overline
{V}_{\Delta,\varnothing}). \label{J_4}%
\end{equation}
Then from Theorem \ref{Theorem1} and (\ref{J_1})-(\ref{J_4}), we get%
\begin{multline*}
L_{\Delta}(s,\frac{f}{g})=\frac{q^{-n}\left(  1-q^{-1}\right)  ^{2}%
N_{\Delta,\left\{  f,g\right\}  }}{\left(  1-q^{-1-s}\right)  \left(
1-q^{-1+s}\right)  }+\frac{q^{-n-s}\left(  1-q^{-1}\right)  N_{\Delta,\left\{
f\right\}  }}{1-q^{-1-s}}+\\
\frac{q^{-n+s}\left(  1-q^{-1}\right)  N_{\Delta,\left\{  g\right\}  }%
}{1-q^{-1+s}}+q^{-n}\left\{  \left(  q-1\right)  ^{n}-N_{\Delta,\left\{
f\right\}  }-N_{\Delta,\left\{  g\right\}  }-N_{\Delta,\left\{  f,g\right\}
}\right\}  .
\end{multline*}
The announced formula for $L_{\Delta}(s,\frac{f}{g})$ is obtained from the
above formula after some simple algebraic manipulations.

(ii) Notice that for $\boldsymbol{w}\in\mathfrak{D}\left(  \frac{f}{g}\right)
$, $\frac{1}{1-q^{-\sigma(\boldsymbol{w})-(d(\boldsymbol{w},\Gamma
(f))-d(\boldsymbol{w},\Gamma(g))){s}}}$ is holomorphic on $\sigma
(\boldsymbol{w})+(d(\boldsymbol{w},\Gamma(f))-d(\boldsymbol{w},\Gamma
(g)))\operatorname{Re}(s)>0$, and that $\frac{1}{1-q^{-1-s}}$ is holomorphic
on $\operatorname{Re}(s)>-1$, and $\frac{1}{1-q^{-1+s}}$ is holomorphic on
$\operatorname{Re}(s)<1$, then, from the explicit formula for $Z(s,\frac{f}%
{g})$ given in (i) follows that it is holomorphic on the band $\widetilde
{\beta}<\operatorname{Re}(s)<\widetilde{\alpha}$. Now, since $Z(s,\frac{f}%
{g})=Z(s,-s,f,g)$, $Z(s,\frac{f}{g})$ is given by integral (\ref{integral})
because $Z(s_{1},s_{2},f,g)$ agrees with an integral on its natural domain.

(iii) It is a direct consequence of the explicit formula.
\end{proof}

\section{\label{Sect_6}The largest and smallest real part of the poles of
$Z(s,\frac{f}{g})$ (different from $-1$ and $1$, respectively)}

In this section we use all the notation introduced in Section \ref{Sect_5}. We
work with a fix simplicial polyhedral subdivision $\mathcal{F}\left(  \frac
{f}{g}\right)  $ of $\mathbb{R}_{+}^{n}$ subordinate to $\Gamma\left(
\frac{f}{g}\right)  $. We recall that in the case $T_{-}\neq\varnothing$,
\[
\beta=\max_{\boldsymbol{w}\in T_{-}}\left\{  \frac{\sigma(\boldsymbol{w}%
)}{d(\boldsymbol{w},\Gamma(g))-d(\boldsymbol{w},\Gamma(f))}\right\}
\]
is the largest possible `non-trivial' negative real part of the poles of
$Z(s,\frac{f}{g})$. We set
\[
\mathcal{P}(\beta):=\left\{  \boldsymbol{w}\in T_{-};\frac{\sigma
(\boldsymbol{w})}{d(\boldsymbol{w},\Gamma(g))-d(\boldsymbol{w},\Gamma
(f))}=\beta\right\}  ,
\]
and for $m\in\mathbb{N}$ with $1\leq m\leq n$,
\[
\mathcal{M}_{m}(\beta):=\left\{  \text{$\Delta\in\mathcal{F}\left(  \frac
{f}{g}\right)  $; $\Delta$ has exactly $m$ generators belonging to
}\mathcal{P}(\beta)\right\}  ,
\]
and $\rho:=\max\left\{  m;\mathcal{M}_{m}(\beta)\neq\varnothing\right\}  $.

\begin{theorem}
\label{Theorem3}Suppose that $\frac{f}{g}$ is non-degenerated over
${\mathbb{F}}_{q}$ with respect to $\Gamma(\frac{f}{g})$ and that $T_{-}%
\neq\varnothing$. If $\beta>-1$, then $\beta$ is a pole of $Z(s,\frac{f}{g})$
of multiplicity $\rho$.
\end{theorem}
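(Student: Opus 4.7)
The plan is to use the explicit formula of Theorem~\ref{Theorem2}(i) and analyze the pole structure at $s=\beta$. The first step is to identify which factors in $Z(s,f/g)=\sum_{\Delta}L_{\Delta}(s,f/g)S_{\Delta}(s)$ can produce a pole there. Since $\beta>-1$ by hypothesis and $\beta<0<1$, the only denominator expressions appearing in $L_{\Delta}(s,f/g)$, namely $1-q^{-1-s}$ and $1-q^{-1+s}$, are nonzero at $s=\beta$; hence each $L_{\Delta}(\beta,f/g)$ is finite. Consequently all poles at $\beta$ originate from the denominator factors $1-q^{-\sigma(\boldsymbol{w})-(d(\boldsymbol{w},\Gamma(f))-d(\boldsymbol{w},\Gamma(g)))s}$ of $S_{\Delta}(s)$, one per generator $\boldsymbol{w}$ of $\Delta$, and such a factor vanishes at $s=\beta$ exactly when $\boldsymbol{w}\in\mathcal{P}(\beta)$. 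Since each $\Delta$ is simplicial with generators in $\mathfrak{D}(f/g)$, the contribution of $\Delta$ has a pole at $\beta$ of order at most the number of generators of $\Delta$ lying in $\mathcal{P}(\beta)$. This yields the upper bound: the order of $\beta$ as a pole of $Z(s,f/g)$ is at most $\rho$.

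The second step is to compute the coefficient of $(s-\beta)^{-\rho}$. Only cones $\Delta\in\mathcal{M}_{\rho}(\beta)$ contribute. For such a $\Delta$, after ordering the generators so that $\boldsymbol{w}_{1},\dots,\boldsymbol{w}_{\rho}\in\mathcal{P}(\beta)$ and $\boldsymbol{w}_{\rho+1},\dots,\boldsymbol{w}_{l}\notin\mathcal{P}(\beta)$, expanding each vanishing factor as $1-q^{-a(s)}=a'(\beta)(\ln q)(s-\beta)+O((s-\beta)^{2})$ gives a contribution
\[
C_{\Delta}=\frac{L_{\Delta}(\beta,\tfrac{f}{g})\,N_{\Delta}(\beta)}{(\ln q)^{\rho}\prod_{i=1}^{\rho}\bigl(d(\boldsymbol{w}_{i},\Gamma(f))-d(\boldsymbol{w}_{i},\Gamma(g))\bigr)\prod_{i=\rho+1}^{l}\bigl(1-q^{-\sigma(\boldsymbol{w}_{i})-(d(\boldsymbol{w}_{i},\Gamma(f))-d(\boldsymbol{w}_{i},\Gamma(g)))\beta}\bigr)},
\]
where $N_{\Delta}(\beta)$ denotes the numerator of $S_{\Delta}$ at $s=\beta$. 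The total coefficient is $\sum_{\Delta\in\mathcal{M}_{\rho}(\beta)}C_{\Delta}$, and the claim reduces to showing this sum is nonzero.

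The third step is to establish strict positivity of every $C_{\Delta}$, which yields non-cancellation at once. The numerator $N_{\Delta}(\beta)$ is a finite sum of positive real numbers. For $\boldsymbol{w}_{i}\in\mathcal{P}(\beta)\subset T_{-}$ we have $d(\boldsymbol{w}_{i},\Gamma(f))-d(\boldsymbol{w}_{i},\Gamma(g))>0$ by definition of $T_{-}$. For each $i>\rho$, a case split on whether $\boldsymbol{w}_{i}\in T_{+}$, $\boldsymbol{w}_{i}\in T_{-}\setminus\mathcal{P}(\beta)$, or $\boldsymbol{w}_{i}\notin T_{+}\cup T_{-}$, using the maximality of $\beta$ over $T_{-}$ together with $\beta<0$, shows in each instance that $\sigma(\boldsymbol{w}_{i})+(d(\boldsymbol{w}_{i},\Gamma(f))-d(\boldsymbol{w}_{i},\Gamma(g)))\beta>0$, so the corresponding denominator factor is $>0$. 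For $L_{\Delta}(\beta,f/g)$ I would not use the form written in Theorem~\ref{Theorem2}(i) but rather its equivalent partition form inherited from Theorem~\ref{Theorem1}, namely $L_{\Delta}=q^{-n}\sum_{I}\mathrm{Card}(\overline{V}_{\Delta,I})\prod_{i\in I}\frac{(q-1)q^{-1-s_{i}}}{1-q^{-1-s_{i}}}$ with $s_{1}=s$, $s_{2}=-s$: for $\beta\in(-1,1)$ each prefactor is positive at $s=\beta$, the cardinalities are nonnegative, and they sum to $(q-1)^{n}>0$, so at least one summand is strictly positive and therefore $L_{\Delta}(\beta,f/g)>0$. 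Hence each $C_{\Delta}>0$, the sum is strictly positive, and $\beta$ is a pole of exact order $\rho$.

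The main obstacle I anticipate is the positivity of $L_{\Delta}(\beta,f/g)$. The formula stated in Theorem~\ref{Theorem2}(i) is an alternating combination of quantities involving $N_{\Delta,\{f\}}$, $N_{\Delta,\{g\}}$, $N_{\Delta,\{f,g\}}$ whose individual signs at $s=\beta$ are mixed, so positivity is not visible from that expression. Returning to the pre-simplification partition form coming from Theorem~\ref{Theorem1}, in which $L_{\Delta}$ is manifestly a sum of nonnegative terms, is the key manoeuvre that makes the non-cancellation argument transparent and completes the proof.
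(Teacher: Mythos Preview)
Your proposal is correct and follows essentially the same strategy as the paper: both arguments show that the contribution of each cone $\Delta$ to the $\rho$-th order residue at $s=\beta$ is nonnegative, and strictly positive for every $\Delta\in\mathcal{M}_{\rho}(\beta)$, so no cancellation occurs. The structural steps (identifying which denominator factors vanish, handling the generators $\boldsymbol{w}_{i}$ with $i>\rho$ by a case analysis on $T_{+}$, $T_{-}$, and the complement, and observing the numerator of $S_{\Delta}$ is positive at $\beta$) match the paper's proof closely.

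The one substantive difference is in establishing $L_{\Delta}(\beta,f/g)>0$. The paper works directly with the formula of Theorem~\ref{Theorem2}(i) and checks, using $-1<\beta<0$, the three inequalities
\[
\frac{1-q^{-\beta}}{1-q^{-1-\beta}}<1,\qquad \frac{1-q^{\beta}}{1-q^{-1+\beta}}<1,\qquad \frac{(1-q^{-\beta})(1-q^{\beta})}{q(1-q^{-1-\beta})(1-q^{-1+\beta})}<1,
\]
so that $L_{\Delta}(\beta,f/g)>q^{-n}\bigl((q-1)^{n}-N_{\Delta,\{f\}}-N_{\Delta,\{g\}}-N_{\Delta,\{f,g\}}\bigr)\ge 0$. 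Your alternative, reverting to the unpacked partition form $L_{\Delta}=q^{-n}\sum_{I}\mathrm{Card}(\overline{V}_{\Delta,I})\prod_{i\in I}\frac{(q-1)q^{-1-s_{i}}}{1-q^{-1-s_{i}}}$ from Theorem~\ref{Theorem1}, makes positivity immediate since for $\beta\in(-1,1)$ each product is positive and the cardinalities partition $(q-1)^{n}$. Both arguments are valid; yours is a bit cleaner conceptually, while the paper's stays closer to the explicit rational-function form already on the page.
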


\begin{proof}
In order to prove that $\beta$ is a pole of $Z(s,\frac{f}{g})$ of order $\rho
$, it is sufficient to show that
\[
\lim_{s\rightarrow\beta}(1-q^{\beta-s})^{\rho}Z\left(  s,\frac{f}{g}\right)
>0.
\]
This assertion follows from the explicit formula for $Z(s,\frac{f}{g})$ given
in Theorem \ref{Theorem2}, by the following claim:

\textbf{Claim. }$Res\left(  \Delta,\beta\right)  :=\lim_{s\rightarrow\beta
}(1-q^{s-\beta})^{\rho}L_{\Delta}(s,\frac{f}{g})S_{\Delta}(s)\geq0$\textbf{
}for every cone $\Delta\in\mathcal{F}(\frac{f}{g})$. Furthermore, there exists
a cone $\Delta_{0}\in\mathcal{M}_{\rho}(\beta)$ such that $Res\left(
\Delta_{0},\beta\right)  \allowbreak>0$.

We show that\ for at least one cone $\Delta_{0}$ in $\mathcal{M}_{\rho}%
(\beta)$, $Res\left(  \Delta_{0},\beta\right)  \allowbreak>0$, because for any
cone $\Delta\notin\mathcal{M}_{\rho}(\beta)$, $Res\left(  \Delta,\beta\right)
=0$. This last assertion can be verified by using the argument that we give
for the cones in $\mathcal{M}_{\rho}(\beta)$. We first note that there exists
at least one cone $\Delta_{0}$ in $\mathcal{M}_{\rho}(\beta)$. Let
$\boldsymbol{w}_{1},\ldots,\boldsymbol{w}_{\rho},\allowbreak\boldsymbol{w}%
_{\rho+1},\ldots,\boldsymbol{w}_{l}$ its generators with $\boldsymbol{w}%
_{i}\in\mathcal{P}(\beta)\Leftrightarrow1\leq i\leq\rho$.

On the other hand,
\begin{equation}
\lim_{s\rightarrow\beta}L_{\Delta}\left(  s,\frac{f}{g}\right)  >0
\label{calculo de residuo2}%
\end{equation}
for all cones $\Delta\in\mathcal{F}(\frac{f}{g})\cup\left\{  \mathbf{0}%
\right\}  $. Inequality (\ref{calculo de residuo2}) follows from
\[
L_{\Delta}\left(  \beta,\frac{f}{g}\right)  >q^{-n}((q-1)^{n}-N_{\Delta
,\left\{  f\right\}  }-N_{\Delta,\left\{  g\right\}  }-N_{\Delta,\left\{
f,g\right\}  })\geq0
\]
for all cones $\Delta\in\mathcal{F}(\frac{f}{g})\cup\left\{  \mathbf{0}%
\right\}  $. We prove this last inequality in the case $N_{\Delta,\left\{
f\right\}  }>0$, $N_{\Delta,\left\{  g\right\}  }>0$, $N_{\Delta,\left\{
f,g\right\}  }>0$ since the other cases are treated in similar form. In this
case, the inequality \ follows from the formula for $L_{\Delta}(\beta,\frac
{f}{g})$ given in Theorem \ref{Theorem2} , by using that
\begin{multline*}
N_{\Delta,\left\{  f\right\}  }\frac{1-q^{-\beta}}{1-q^{-1-\beta}}%
<N_{\Delta,\left\{  f\right\}  },\text{ }N_{\Delta,\left\{  g\right\}  }%
\frac{1-q^{\beta}}{1-q^{-1+\beta}}<N_{\Delta,\left\{  g\right\}  },\text{ }\\
N_{\Delta,\left\{  f,g\right\}  }\frac{(1-q^{-\beta})(1-q^{\beta}%
)}{q(1-q^{-1-\beta})(1-q^{-1+\beta})}<N_{\Delta,\left\{  f,g\right\}  }\text{
\ when }\beta>-1\text{.}%
\end{multline*}

We also notice that
\[
\lim_{s\rightarrow\beta}\sum_{\boldsymbol{t}}q^{-\sigma(\boldsymbol{t}%
)-(d(\boldsymbol{t},\Gamma(f))-d(\boldsymbol{t},\Gamma(g))){s}}>0.
\]
Hence in order to show that $Res\left(  \Delta_{0},\beta\right)
\allowbreak>0$, it is sufficient to show that
\[
\lim_{s\rightarrow\beta}\frac{(1-q^{s-\beta})^{\rho}}{\prod_{i=1}%
^{l}(1-q^{-\sigma(\boldsymbol{w}_{i})-(d(\boldsymbol{w}_{i},\Gamma
(f))-d(\boldsymbol{w}_{i},\Gamma(g))){s}})}>0.
\]
Now, notice that there are positive integer constants $c_{i}$ such that
\begin{gather*}
\prod_{i=1}^{\rho}(1-q^{-\sigma(\boldsymbol{w}_{i})-(d(\boldsymbol{w}%
_{i},\Gamma(f))-d(\boldsymbol{w}_{i},\Gamma(g))){s}})=\prod_{i=1}^{\rho
}(1-q^{\left(  {s}-\beta\right)  c_{i}})\\
=(1-q^{s-\beta})^{\rho}\prod_{i=1}^{\rho}\text{ }\prod\limits_{\varsigma
^{c_{i}}=1,\varsigma\neq1}\left(  1-\varsigma q^{s-\beta}\right)  .
\end{gather*}
In addition, for $i=\rho+1,\ldots,l$,
\[
1-q^{-\sigma(\boldsymbol{w}_{i})-(d(\boldsymbol{w}_{i},\Gamma
(f))-d(\boldsymbol{w}_{i},\Gamma(g))){\beta}}>0
\]
because $-\sigma(\boldsymbol{w}_{i})-(d(\boldsymbol{w}_{i},\Gamma
(f))-d(\boldsymbol{w}_{i},\Gamma(g))){\beta\leq0}$ for any $\boldsymbol{w}%
_{i}\in T_{+}\cup T_{-}$ with $i=\rho+1,\ldots,l$. From these observations, we
have%
\begin{gather*}
\lim_{s\rightarrow\beta}\frac{(1-q^{s-\beta})^{\rho}}{\prod_{i=1}%
^{l}(1-q^{-\sigma(\boldsymbol{w}_{i})-(d(\boldsymbol{w}_{i},\Gamma
(f))-d(\boldsymbol{w}_{i},\Gamma(g))){s}})}=\\
\lim_{s\rightarrow\beta}\frac{(1-q^{s-\beta})^{\rho}}{(1-q^{s-\beta})^{\rho
}\prod_{i=1}^{\rho}\text{ }\prod\limits_{\varsigma^{c_{i}}=1,\varsigma\neq
1}\left(  1-\varsigma q^{s-\beta}\right)  }\times\\
\lim_{s\rightarrow\beta}\frac{1}{\prod_{i=\rho+1}^{l}(1-q^{-\sigma
(\boldsymbol{w}_{i})-(d(\boldsymbol{w}_{i},\Gamma(f))-d(\boldsymbol{w}%
_{i},\Gamma(g))){s}})}>0.
\end{gather*}

\end{proof}

In the case $T_{+}\neq\varnothing$,
\[
{\alpha}=\min_{\boldsymbol{w}\in T_{+}}\left\{  \frac{\sigma(\boldsymbol{w}%
)}{d(\boldsymbol{w},\Gamma(g))-d(\boldsymbol{w},\Gamma(f))}\right\}  .
\]
is the smallest possible `non-trivial' positive real part of the poles of
$Z(s,\frac{f}{g})$. We set
\[
\mathcal{P}(\alpha):=\left\{  \boldsymbol{w}\in T_{+};\frac{\sigma
(\boldsymbol{w})}{d(\boldsymbol{w},\Gamma(g))-d(\boldsymbol{w},\Gamma
(f))}=\alpha\right\}  ,
\]
and for $m\in\mathbb{N}$ with $1\leq m\leq n$,
\[
\mathcal{M}_{m}(\alpha):=\left\{  \text{$\Delta\in\mathcal{F}\left(  \frac
{f}{g}\right)  $; $\Delta$ has exactly $m$ generators belonging to
}\mathcal{P}(\alpha)\right\}  ,
\]
and $\kappa:=\max\left\{  m;\mathcal{M}_{m}(\alpha)\neq\varnothing\right\}  $

The proof of the following result is similar to the proof of Theorem
\ref{Theorem3}.

\begin{theorem}
\label{Theorem4}Suppose that $\frac{f}{g}$ is non-degenerated over
${\mathbb{F}}_{q}$ with respect to $\Gamma(\frac{f}{g})$ and that $T_{+}%
\neq\varnothing$. If $\alpha<1$, then $\alpha$ is a pole of $Z(s,\frac{f}{g})$
of multiplicity $\kappa$.
\end{theorem}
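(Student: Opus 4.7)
The plan is to mirror, with the appropriate sign changes, the argument of Theorem \ref{Theorem3}. Specifically, it suffices to show that
\[
\lim_{s\to\alpha}(1-q^{s-\alpha})^{\kappa}\, Z\!\left(s,\tfrac{f}{g}\right) > 0,
\]
which, via the cone decomposition in Theorem \ref{Theorem2}(i), reduces to studying
\[
\mathrm{Res}(\Delta,\alpha) := \lim_{s\to\alpha}(1-q^{s-\alpha})^{\kappa}\, L_{\Delta}\!\left(s,\tfrac{f}{g}\right) S_{\Delta}(s)
\]
for each cone $\Delta \in \mathcal{F}(\tfrac{f}{g})\cup\{\mathbf{0}\}$. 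The goal is to prove $\mathrm{Res}(\Delta,\alpha)\geq 0$ for every $\Delta$, and $\mathrm{Res}(\Delta_0,\alpha)>0$ for at least one $\Delta_0\in\mathcal{M}_\kappa(\alpha)$, whose existence is guaranteed by the definition of $\kappa$.

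First I would establish the crude lower bound
\[
L_{\Delta}\!\left(\alpha,\tfrac{f}{g}\right) > q^{-n}\bigl((q-1)^{n} - N_{\Delta,\{f\}} - N_{\Delta,\{g\}} - N_{\Delta,\{f,g\}}\bigr) \geq 0,
\]
exactly as in the proof of Theorem \ref{Theorem3}, by checking that each of the three ratios appearing in the formula of $L_{\Delta}$ in Theorem \ref{Theorem2} is strictly less than $1$ at $s=\alpha$. The hypothesis $\alpha<1$ is used precisely to ensure $1-q^{-1+\alpha}>0$, so that $\tfrac{1-q^{\alpha}}{1-q^{-1+\alpha}}$ is well-defined and negative (hence trivially $<1$); the ratio $\tfrac{1-q^{-\alpha}}{1-q^{-1-\alpha}}$ is positive and $<1$ because $q^{-\alpha}>q^{-1-\alpha}$; and the cross term is negative, hence trivially $<1$. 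Together with $N_{\Delta,\{f\}}+N_{\Delta,\{g\}}+N_{\Delta,\{f,g\}}\leq(q-1)^{n}$, this yields $L_{\Delta}(\alpha,f/g)>0$.

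Next I would analyze $S_{\Delta_0}$ near $s=\alpha$, where $\Delta_0\in\mathcal{M}_\kappa(\alpha)$ has generators ordered so that $\boldsymbol{w}_i\in\mathcal{P}(\alpha)$ iff $1\leq i\leq \kappa$. Each of the first $\kappa$ denominator factors of $S_{\Delta_0}(s)$ has the form $1-q^{(s-\alpha)c_i}$ with $c_i=d(\boldsymbol{w}_i,\Gamma(g))-d(\boldsymbol{w}_i,\Gamma(f))\in\mathbb{Z}_{>0}$, and factors cyclotomically as $(1-q^{s-\alpha})\prod_{\varsigma^{c_i}=1,\,\varsigma\neq 1}(1-\varsigma q^{s-\alpha})$. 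For $i\geq \kappa+1$, the exponent $-\sigma(\boldsymbol{w}_i)+(d(\boldsymbol{w}_i,\Gamma(g))-d(\boldsymbol{w}_i,\Gamma(f)))\alpha$ is strictly negative: for $\boldsymbol{w}_i\in T_+$ this follows from the minimality of $\alpha$, and for $\boldsymbol{w}_i\in T_-$ or with $d(\boldsymbol{w}_i,\Gamma(f))=d(\boldsymbol{w}_i,\Gamma(g))$ it follows from $\alpha>0$ and $\sigma(\boldsymbol{w}_i)>0$. Hence, after the $\kappa$-fold cancellation, the remaining factor is a finite product of strictly positive real numbers; combined with the positivity of $L_{\Delta_0}(\alpha,f/g)$ and of $\sum_{\boldsymbol{t}}q^{-\sigma(\boldsymbol{t})-(d(\boldsymbol{t},\Gamma(f))-d(\boldsymbol{t},\Gamma(g)))\alpha}$, this gives $\mathrm{Res}(\Delta_0,\alpha)>0$. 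For any $\Delta\notin\mathcal{M}_\kappa(\alpha)$, strictly fewer than $\kappa$ denominator factors of $S_\Delta$ vanish at $s=\alpha$, so multiplying by $(1-q^{s-\alpha})^\kappa$ forces $\mathrm{Res}(\Delta,\alpha)=0$.

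The main obstacle is keeping the signs straight in the positivity argument for $L_{\Delta}(\alpha,f/g)$: unlike the regime $-1<\beta<0$ of Theorem \ref{Theorem3}, here $\alpha>0$ reverses which of the first two ratios is positive and which is negative, so the chain of inequalities leading to $L_{\Delta}(\alpha,f/g)>0$ must be re-verified term-by-term, with $\alpha<1$ playing the symmetric role of $\beta>-1$. Once this is done, the cyclotomic factorization of $S_\Delta$ and the identification of the non-vanishing factors transcribe directly from the proof of Theorem \ref{Theorem3}.
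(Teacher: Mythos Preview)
Your proposal is correct and is precisely the adaptation of the proof of Theorem~\ref{Theorem3} that the paper intends; indeed the paper gives no separate proof of Theorem~\ref{Theorem4}, stating only that it is similar to that of Theorem~\ref{Theorem3}. Your careful re-verification of the inequalities for $L_{\Delta}(\alpha,f/g)$ under the hypothesis $0<\alpha<1$, and your case analysis showing that the remaining denominator factors of $S_{\Delta_0}$ are strictly positive at $s=\alpha$, fill in exactly the details one needs when transcribing the argument.
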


\begin{example}
\label{Ej1}We compute the local zeta function for the rational function given
in Example \ref{Ejemplo1}. With the notation of Theorem \ref{Theorem2}, one
verifies that%
\[
\overset{%
\begin{tabular}
[c]{|l|c|c|}\hline
$Cone$ & $L_{\Delta}$ & $S_{\Delta}$\\\hline
\multicolumn{1}{|c|}{$\left\{  \boldsymbol{0}\right\}  $} & $q^{-2}%
((q-1)^{2}-(q-1)\frac{1-q^{-s}}{1-q^{-1-s}})$ & $1$\\\hline
\multicolumn{1}{|c|}{$\Delta_{1}$} & $q^{-2}(q-1)^{2}$ & $\frac{q^{-1+2s}%
}{1-q^{-1+2s}}$\\\hline
\multicolumn{1}{|c|}{$\Delta_{2}$} & $q^{-2}(q-1)^{2}$ & $\frac{q^{-2+2s}%
+q^{-4+4s}}{(1-q^{-1+2s})(1-q^{-3+2s})}$\\\hline
\multicolumn{1}{|c|}{$\Delta_{3}$} & $q^{-2}((q-1)^{2}-(q-1)\frac{1-q^{-s}%
}{1-q^{-1-s}})$ & $\frac{q^{-3+2s}}{1-q^{-3+2s}}$\\\hline
\multicolumn{1}{|c|}{$\Delta_{4}$} & $q^{-2}(q-1)^{2}$ & $\frac{q^{-4+3s}%
}{(1-q^{-3+2s})(1-q^{-1+s})}$\\\hline
\multicolumn{1}{|c|}{$\Delta_{5}$} & $q^{-2}(q-1)^{2}$ & $\frac{q^{-1+s}%
}{(1-q^{-1+s})}.$\\\hline
\end{tabular}
\ \ \ \ \ \ \ \ }{}%
\]
Therefore%
\[
Z(s,\frac{f}{g})=\frac{\frac{\left(  q-1\right)  }{q^{2}}L(q^{-s})}%
{(1-q^{s-1})(1-q^{-1-s})(1-q^{2s-1})(1-q^{2s-3})},
\]
where%
\begin{align*}
L(q^{-s})  & =q-q^{-1}-2-q^{2s-4}+q^{s-3}-q^{s-2}+q^{2s-2}+q^{3s-3}\\
& +2q^{2s-1}-q^{3s-2}-q^{3s-1}+q^{-s-1}.
\end{align*}
\ Furthermore, $Z(s,\frac{f}{g})$ has poles with real parts belonging to
$\left\{  -1,1/2,1,3/2\right\}  $.
\end{example}

\end{document}